\begin{document}
\newtheorem{thm}{Theorem}[section]
\newtheorem*{thm*}{Theorem}
\newtheorem{lem}[thm]{Lemma}
\newtheorem{prop}[thm]{Proposition}
\newtheorem{cor}[thm]{Corollary}
\newtheorem*{conj}{Conjecture}
\newtheorem{proj}[thm]{Project}

\theoremstyle{definition}
\newtheorem*{defn}{Definition}
\newtheorem*{remark}{Remark}
\newtheorem{exercise}{Exercise}
\newtheorem*{exercise*}{Exercise}

\numberwithin{equation}{section}

\newcommand{\rad}{\operatorname{rad}}

\newcommand{\Z}{{\mathbb Z}} 
\newcommand{\Q}{{\mathbb Q}}
\newcommand{\R}{{\mathbb R}}
\newcommand{\C}{{\mathbb C}}
\newcommand{\Cc}{{\mathcal C}}
\newcommand{\N}{{\mathbb N}}
\newcommand{\FF}{{\mathbb F}}
\newcommand{\fq}{\mathbb{F}_q}
\newcommand{\rmk}[1]{\footnote{{\bf Comment:} #1}}

\renewcommand{\mod}{\;\operatorname{mod}}
\newcommand{\ord}{\operatorname{ord}}
\newcommand{\TT}{\mathbb{T}}
\renewcommand{\i}{{\mathrm{i}}}
\renewcommand{\d}{{\mathrm{d}}}
\renewcommand{\^}{\widehat}
\newcommand{\HH}{\mathbb H}
\newcommand{\Vol}{\operatorname{vol}}
\newcommand{\area}{\operatorname{area}}
\newcommand{\tr}{\operatorname{tr}}
\newcommand{\norm}{\mathcal N} 
\newcommand{\intinf}{\int_{-\infty}^\infty}
\newcommand{\ave}[1]{\left\langle#1\right\rangle} 
\newcommand{\Var}{\operatorname{Var}}
\newcommand{\Prob}{\operatorname{Prob}}
\newcommand{\sym}{\operatorname{Sym}}
\newcommand{\disc}{\operatorname{disc}}
\newcommand{\CA}{{\mathcal C}_A}
\newcommand{\cond}{\operatorname{cond}} 
\newcommand{\lcm}{\operatorname{lcm}}
\newcommand{\Kl}{\operatorname{Kl}} 
\newcommand{\leg}[2]{\left( \frac{#1}{#2} \right)}  
\newcommand{\Li}{\operatorname{Li}}

\newcommand{\sumstar}{\sideset \and^{*} \to \sum}

\newcommand{\LL}{\mathcal L} 
\newcommand{\sumf}{\sum^\flat}
\newcommand{\Hgev}{\mathcal H_{2g+2,q}}
\newcommand{\USp}{\operatorname{USp}}
\newcommand{\conv}{*}
\newcommand{\dist} {\operatorname{dist}}
\newcommand{\CF}{c_0} 
\newcommand{\kerp}{\mathcal K}

\newcommand{\Cov}{\operatorname{cov}}
\newcommand{\Sym}{\operatorname{Sym}}

\newcommand{\Ht}{\operatorname{Ht}}

\newcommand{\E}{\operatorname{\mathbb E}} 
\newcommand{\sign}{\operatorname{sign}} 
\newcommand{\meas}{\operatorname{meas}} 
\newcommand{\diam}{\operatorname{diam}} 
\newcommand{\length}{\operatorname{length}} 

\newcommand{\divid}{d} 

\newcommand{\GL}{\operatorname{GL}}
\newcommand{\SL}{\operatorname{SL}}
\newcommand{\re}{\operatorname{Re}}
\newcommand{\im}{\operatorname{Im}}
\newcommand{\res}{\operatorname{Res}}
 \newcommand{\eigen}{\Lambda} 

\newcommand{\legP}{ {\mathrm P}} 
\newcommand{\legQ}{ {\mathrm Q}}
\newcommand{\Ohyp}{ {\mathrm F}} 
\newcommand{\dsE}{\mathcal E} 

\title[Robin spectrum for the hemisphere]{On the Robin spectrum for the hemisphere}
\author{Ze\'ev Rudnick and Igor Wigman}
\address{School of Mathematical Sciences, Tel Aviv University, Tel Aviv 69978, Israel} \email{rudnick@tauex.tau.ac.il}
\address{Department of Mathematics, King's College London, UK}\email{igor.wigman@kcl.ac.uk}

\thanks{This research was supported by the European Research Council (ERC) under the European Union's  Horizon 2020 research and innovation programme  (Grant agreement No. 786758) and by the Israel Science Foundation (grant No. 1881/20). We are grateful to Nadav Yesha for discussions on various aspects of this project.}

\begin{abstract}

We study the spectrum of the Laplacian on the hemisphere with Robin boundary conditions. It is found that the eigenvalues fall into small clusters around the Neumann spectrum, and satisfy a Szeg\H{o} type limit theorem. 
Sharp upper and lower bounds for the gaps between the Robin and Neumann eigenvalues are derived, showing in particular that these are unbounded. 
Further, it is shown that except for a systematic double multiplicity, there are no multiplicities in the spectrum as soon as the Robin parameter is positive, unlike the Neumann case which is highly degenerate.
Finally, the limiting spacing distribution of the desymmetrized spectrum 
is proved to be the delta function at the origin.
 
\end{abstract}
  \keywords{ Robin boundary conditions, Robin-Neumann gaps, Laplacian, hemisphere, level spacing distribution.}
  \subjclass[2010]{Primary 35P20,  Secondary 37D50, 58J51, 81Q50}
\date{\today}
\maketitle


 \section{Introduction}

\subsection{The Robin problem}
Let $\Omega $ be the upper unit hemisphere (Figure~\ref{fig:hemisphere}), with its boundary $\partial \Omega$ the equator.
 Our goal is to study the Robin boundary problem on the hemisphere $\Omega$:
$$
\Delta F+\lambda F=0, \quad  \frac{\partial F}{\partial n} +\sigma F=0
$$
 where $\partial/\partial n$ is the derivative in the direction of the outward pointing normal to the equator, and $\sigma\geq 0$ is a constant. 
     \begin{figure}[ht]
\begin{center}
\includegraphics[height=50mm]{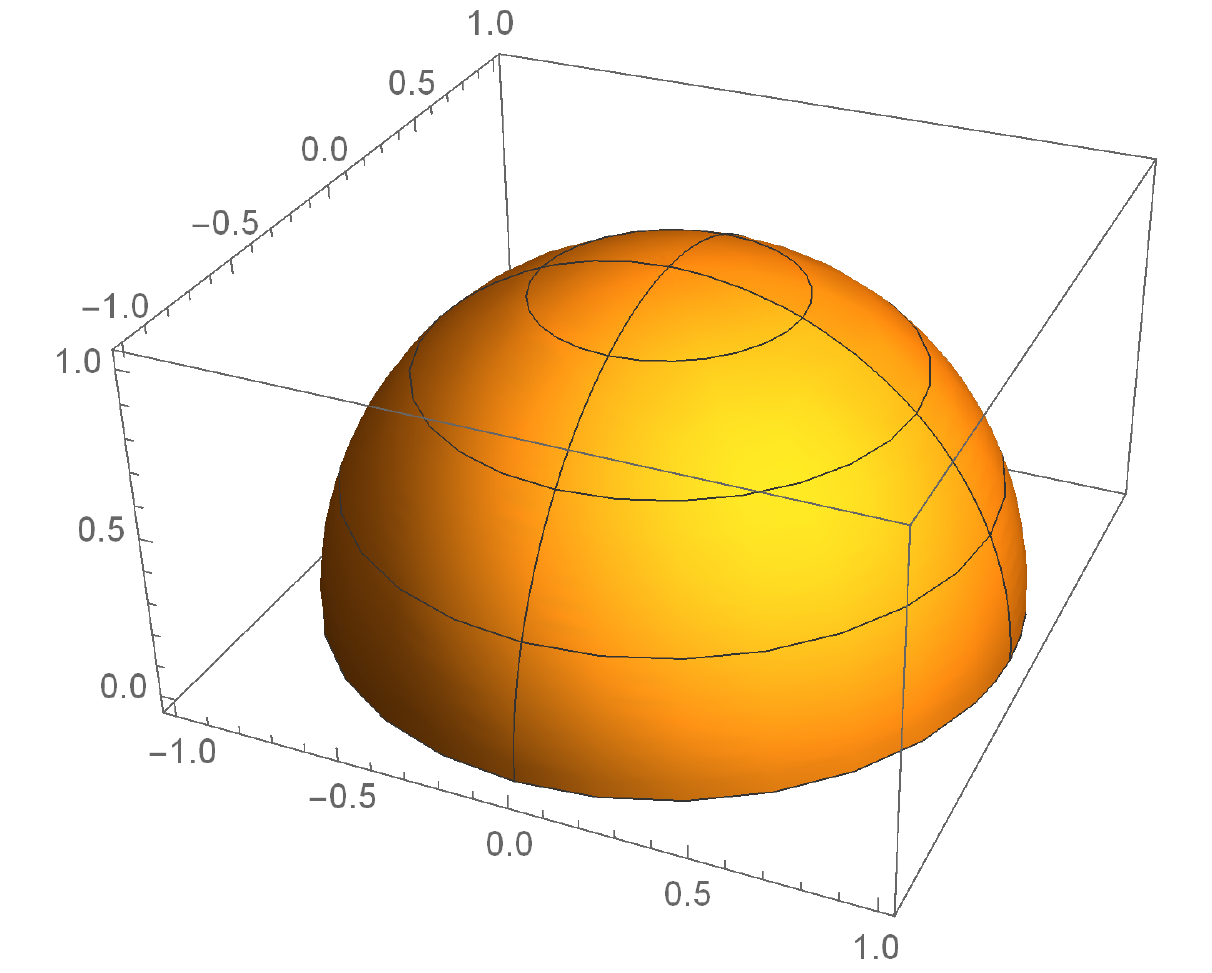}
\caption{ The hemisphere. }
\label{fig:hemisphere}
\end{center}
\end{figure}

The cases of Neumann and Dirichlet boundary conditions ($\sigma=0$ or $\sigma=\infty$) are classical \cite[p. 243-244]{BB}:
The eigenfunctions are restrictions to $\Omega$ of the eigenfunctions on the sphere (spherical harmonics), determined by the parity under reflection in the equator: The odd spherical harmonics give the Dirichlet eigenfunctions, the even ones give the Neumann eigenfunctions.
The eigenvalues are thus of the form $\ell(\ell+1)$, where $\ell\geq 0$ is an integer, repeated with multiplicity  $\ell+1$  for the Neumann case, and $\ell$ for the Dirichlet case.

The Robin spectrum is significantly less understood, and it is the main object of our interest.
The problem admits separation of variables, and there is a basis of eigenfunctions in the form
$f_{\nu,m}\linebreak[4] =e^{im\phi}\legP_\nu^m(\cos \theta)$, $m\in \Z$, where $\legP_\nu^m(x)$ is an associated Legendre function. For each $m$, the admissible $\nu$'s are determined by the boundary condition. 

Both $f_{\nu,m}$ and $f_{\nu,-m}$ share the same   Laplace eigenvalue  $\nu(\nu+1)$. Therefore the Robin spectrum admits a systematic double multiplicity, and we remove it beforehand by insisting that $m\geq 0$, resulting in a ``desymmetrized spectrum''.
 Let $\lambda_n(0)$ denote the ordered desymmetrized Neumann eigenvalues (repeated with appropriate multiplicity), and for $\sigma>0$ we denote by $\lambda_n(\sigma)$ the ordered  desymmetrized Robin eigenvalues, and define the Robin-Neumann (RN) gaps by
\[
d_n(\sigma):=\lambda_n(\sigma)-\lambda_n(0).
\]
These were recently investigated in \cite{RWY} in the case of planar domains, and will be the main object of study here.

\subsection{Clusters}
We show that the desymmetrized Robin spectrum breaks up into small clusters $\dsE_\ell(\sigma)$ of size $\lfloor \ell/2 \rfloor +1$, 
concentrated around the Neumann eigenvalues $\ell(\ell+1)$:
 For each eigenvalue $\nu(\nu+1)$, there is some $m\geq 0$, and a corresponding eigenfunction $ e^{im\phi}\legP_\nu^m(\cos \theta)$, so that  the ``degree'' $\nu$ satisfies a secular equation $S_m(\nu)=\sigma$ , where
 \[
 S_m(\nu) =  2\tan \left(\frac{\pi(m+\nu)}2\right)
  \frac{\Gamma\left(\frac{\nu+ m}2+1\right)\Gamma\left(\frac{\nu-m}2+1\right) }  { \Gamma\left(\frac{ \nu+m+1}{2}\right)
  \Gamma\left(\frac{ \nu -m +1}{2}\right)  } .
 \]
 For any integer $\ell\geq m$ of the same parity ($\ell=m\bmod 2$), there is a unique solution $\nu_{\ell,m}(\sigma)$ in the open interval $(\ell,\ell+1)$. Denote by $\Lambda_{\ell,m}(\sigma) = \nu_{\ell,m}(\sigma)(\nu_{\ell,m}(\sigma)+1)$ the resulting Laplace eigenvalue. Then the desymmetrized spectrum consists of   $\Lambda_{\ell,m}(\sigma)$, with $0\leq m\leq \ell$, and $m=\ell \bmod 2$, and is partitioned into disjoint clusters of size $\lfloor \ell/2 \rfloor +1$: 
\[
\dsE_\ell(\sigma)=\{\Lambda_{\ell,m}(\sigma) : 0\leq m\leq \ell,  m=\ell \bmod 2\} .
\]

We denote by $d_{\ell,m}(\sigma)$ the Robin-Neumann (RN) gaps in each cluster:
$$d_{\ell,m}(\sigma) =\Lambda_{\ell,m}(\sigma)-\ell(\ell+1) .$$
We have an asymptotic formula:
\begin{prop}
Fix $\sigma>0$. Let $0\leq m\leq \ell$, $m=\ell \bmod 2$.  If  $\ell-m\to \infty$  then
   \begin{equation}\label{dlm formula}
  d_{\ell,m}(\sigma) \sim  \frac{2\sigma}{\pi}\frac{2\ell+1}{\sqrt{\ell^2-m^2}} .
 \end{equation}
 \end{prop}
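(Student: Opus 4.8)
The plan is to analyze the secular equation $S_m(\nu)=\sigma$ directly, via the substitution $\nu=\nu_{\ell,m}(\sigma)=\ell+\delta$ with $\delta=\delta_{\ell,m}(\sigma)\in(0,1)$. Since $\ell\equiv m\bmod 2$, the number $m+\ell$ is an even integer, so $\tan\!\big(\tfrac{\pi(m+\nu)}{2}\big)=\tan\!\big(\tfrac{\pi\delta}{2}\big)$, and the secular equation reads
\[
\sigma \;=\; 2\tan\!\Big(\tfrac{\pi\delta}{2}\Big)\,R_{\ell,m}(\delta),\qquad
R_{\ell,m}(\delta):=\frac{\Gamma\!\big(\tfrac{\ell+m+\delta}{2}+1\big)\,\Gamma\!\big(\tfrac{\ell-m+\delta}{2}+1\big)}{\Gamma\!\big(\tfrac{\ell+m+\delta+1}{2}\big)\,\Gamma\!\big(\tfrac{\ell-m+\delta+1}{2}\big)}.
\]
Because $d_{\ell,m}(\sigma)=\nu(\nu+1)-\ell(\ell+1)=(2\ell+1)\delta+\delta^2$, the proposition will follow once I show $\delta_{\ell,m}(\sigma)\sim\frac{2\sigma}{\pi\sqrt{\ell^2-m^2}}$; this forces $\delta\to0$, so the $\delta^2$ term is negligible compared with $(2\ell+1)\delta$.

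The one genuine analytic input is a uniform asymptotic for the Gamma quotient $R_{\ell,m}(\delta)$. The hypothesis $\ell-m\to\infty$ forces \emph{both} $\tfrac{\ell+m}{2}$ and $\tfrac{\ell-m}{2}$ to infinity (the former because $\ell+m\ge\ell-m$), so each of the two factors in $R_{\ell,m}(\delta)$ is of the shape $\Gamma(x+a)/\Gamma(x+b)=x^{a-b}\big(1+O(1/x)\big)$ with $x\to\infty$ and $a-b=\tfrac12$; packaging these (uniformly for $\delta\in[0,1]$) gives
\[
R_{\ell,m}(\delta)=\frac{\sqrt{\ell^2-m^2}}{2}\big(1+o(1)\big)\qquad\text{as }\ell-m\to\infty .
\]
Next I would deduce $\delta_{\ell,m}(\sigma)\to0$: fix $\varepsilon\in(0,\tfrac12)$; for $\delta\in[\varepsilon,1)$ one has $\tan(\tfrac{\pi\delta}{2})\ge\tan(\tfrac{\pi\varepsilon}{2})>0$, so the right-hand side of the secular equation is $\gtrsim \tan(\tfrac{\pi\varepsilon}{2})\sqrt{\ell^2-m^2}\to\infty$, eventually exceeding the fixed constant $\sigma$; hence the unique root in $(\ell,\ell+1)$ must lie in $(\ell,\ell+\varepsilon)$, i.e.\ $\delta_{\ell,m}(\sigma)<\varepsilon$ for all large $\ell-m$.

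With $\delta\to0$ established, I substitute $\tan(\tfrac{\pi\delta}{2})=\tfrac{\pi\delta}{2}\big(1+O(\delta^2)\big)$ and the Gamma asymptotic into the secular equation to obtain $\sigma=\tfrac{\pi}{2}\,\delta\,\sqrt{\ell^2-m^2}\,(1+o(1))$, whence $\delta_{\ell,m}(\sigma)\sim\frac{2\sigma}{\pi\sqrt{\ell^2-m^2}}$; multiplying by $2\ell+1$ and absorbing $\delta^2=o\big((2\ell+1)\delta\big)$ yields \eqref{dlm formula}.

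The main obstacle is bookkeeping rather than conceptual: making the Gamma asymptotic honestly uniform in $\delta\in[0,1]$, and confirming that $\ell-m\to\infty$ really drives all four Gamma arguments to infinity so that Stirling applies throughout (a convenient way to see the latter is to rewrite each quotient $\Gamma(x+1)/\Gamma(x+\tfrac12)$ through the duplication formula in terms of central-binomial quantities and invoke Stirling's formula). Everything else — the substitution, the elementary tangent expansion, and the closing algebra — is routine.
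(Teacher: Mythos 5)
Your proposal is correct and follows essentially the same route as the paper: the same substitution $\nu=\ell+\delta$, the reduction of the secular equation to $\sigma=2\tan(\tfrac{\pi\delta}{2})G(\nu+m)G(\nu-m)$, the Stirling-type asymptotic $G(s)\sim\sqrt{s/2}$ for both Gamma quotients (valid since $\ell-m\to\infty$ forces both arguments to infinity), the expansion of the tangent, and finally $d_{\ell,m}=(2\ell+1)\delta+\delta^2$ with the quadratic term absorbed. The only cosmetic difference is that the paper imports the a priori bound $\delta\ll\sigma/\sqrt{\nu}$ from its earlier proposition, whereas you re-derive $\delta\to0$ directly from the growth of the right-hand side of the secular equation; both are equally valid.
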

We display a plot of these RN gaps in Figure~\ref{fig:clustergaps150}.

\subsection{A Szeg\H{o} type limit theorem}
 We   show, using \eqref{dlm formula},  that the RN gaps from each cluster have a limiting distribution, supported on the ray $[4\sigma/\pi,\infty)$:
 \begin{cor}\label{thm:Szego}
Fix $f\in C_c^\infty(0,\infty)$.  As $\ell \to \infty$,
\[
  \frac 1{\#\dsE_\ell(\sigma)} \sum_{\lambda_n(\sigma) \in \dsE_\ell(\sigma)} f\left( d_n\left( \sigma \right) \right)
=  \int_{4\sigma/\pi}^\infty f(y) \frac{16\sigma^2 dy}{\pi^2 y^3\sqrt{1-(\frac {4\sigma}{\pi y})^2}} .
\]
 \end{cor}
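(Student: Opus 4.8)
The plan is to recognise the left-hand side as a Riemann sum. Set
\[
g(u):=\frac{4\sigma}{\pi\sqrt{1-u^2}},\qquad u\in[0,1),
\]
so that \eqref{dlm formula} reads $d_{\ell,m}(\sigma)=g(m/\ell)\bigl(1+o(1)\bigr)$ as $\ell-m\to\infty$, the error being automatically uniform in $m$ over that range (a non-uniform error would produce a sequence $(\ell_k,m_k)$ with $\ell_k-m_k\to\infty$ violating the Proposition). Since $\#\dsE_\ell(\sigma)=\lfloor\ell/2\rfloor+1\sim\ell/2$ and the admissible values in the $\ell$-th cluster are $m=\ell,\ell-2,\ell-4,\dots$, the ratios $u_m:=m/\ell$ run through an arithmetic progression of step $2/\ell$ in $[0,1]$. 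Modulo replacing each $d_{\ell,m}(\sigma)$ by $g(u_m)$, the claim then reduces to the elementary Riemann-sum limit
\[
\frac1{\#\dsE_\ell(\sigma)}\sum_{\substack{0\le m\le\ell\\ m=\ell\bmod 2}} f\bigl(g(u_m)\bigr)\ \longrightarrow\ \int_0^1 f\bigl(g(u)\bigr)\,\d u
\]
for the bounded continuous integrand $f\circ g$, which has compact support inside $[0,1)$ because $g(u)\to\infty$ as $u\to1$ while $f$ is supported away from $\infty$. Finally the stated formula emerges from the change of variables $y=g(u)$: the map $g$ is an increasing bijection $[0,1)\to[4\sigma/\pi,\infty)$ with $u=\sqrt{1-(\frac{4\sigma}{\pi y})^2}$ and $\d u=\frac{16\sigma^2\,\d y}{\pi^2 y^3\sqrt{1-(\frac{4\sigma}{\pi y})^2}}$, turning $\int_0^1 f(g(u))\,\d u$ into the asserted integral.

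The step requiring care — and what I view as the main obstacle — is justifying the replacement of $d_{\ell,m}(\sigma)$ by $g(u_m)$ \emph{uniformly}, in particular controlling the top of the cluster ($m\to\ell$), where \eqref{dlm formula} degenerates. I would fix $[a,b]\supset\operatorname{supp}f$ and, using that $g$ is increasing and unbounded, choose $\delta>0$ with $g(1-\delta)>2b$. On the bulk $m\le(1-\delta)\ell$ one has $\ell-m\ge\delta\ell\to\infty$, so \eqref{dlm formula} holds uniformly; since $g\le g(1-\delta)$ there, $d_{\ell,m}(\sigma)-g(u_m)\to0$ uniformly, and then uniform continuity of $f$ gives $f(d_{\ell,m}(\sigma))=f(g(u_m))+o(1)$ uniformly, so the bulk contributes $\int_0^{1-\delta}f(g(u))\,\d u+o(1)=\int_0^1 f(g(u))\,\d u+o(1)$ (the last equality because $f\circ g$ vanishes on $[1-\delta,1)$). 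For the top $m>(1-\delta)\ell$, split once more: the $O(W)$ indices with $\ell-m\le W$ — for any $W=W(\ell)\to\infty$ with $W=o(\ell)$ — contribute $O(W/\ell)\|f\|_\infty=o(1)$, while for $\ell-m>W$ the asymptotic \eqref{dlm formula} again applies and gives $d_{\ell,m}(\sigma)\ge g(1-\delta)\bigl(1+o(1)\bigr)>b$ for large $\ell$, so $f(d_{\ell,m}(\sigma))=0$. Thus the top of the cluster is negligible and the contributions combine to the claim.

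A couple of routine checks remain: that the limiting density $\frac{16\sigma^2}{\pi^2 y^3\sqrt{1-(\frac{4\sigma}{\pi y})^2}}$ is integrable at the endpoint $y=4\sigma/\pi$ — it has only an integrable inverse-square-root singularity there, mirroring $\d u/\d y\to\infty$ as $u\to0$ — so the right-hand side is finite with no boundary pathology; and that when $\operatorname{supp}f$ sits strictly to the right of $4\sigma/\pi$ the indices with $m/\ell$ near $0$ simply drop out, consistently with $f\circ g$ vanishing near $u=0$. Everything beyond this is bookkeeping.
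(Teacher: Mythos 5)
Your argument is correct and follows essentially the same route the paper intends: the paper only sketches this corollary, remarking that it is proved like the mean-value computation in \S\ref{sec:equidistribution and mean}, which is exactly your Riemann-sum-plus-change-of-variables argument based on \eqref{dlm formula}/\eqref{dlm formula 2}. Your treatment of the top of the cluster (using compact support of $f$ and uniformity of the asymptotic, where the paper would instead invoke the quantitative error term and the a priori bound $d_{\ell,m}\ll\sqrt{\ell}$) is a sound variant of the same bookkeeping.
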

Similarly, we can compute the mean value of the RN gaps within each cluster (\S~\ref{sec:equidistribution and mean}):
\begin{equation}\label{mean value in clusters}
\lim_{\ell \to \infty} \frac 1{\#\dsE_\ell(\sigma)} \sum_{\lambda_n(\sigma) \in \dsE_\ell(\sigma)} d_n(\sigma) \sim 2\sigma .
\end{equation}
Note that $2 = 2\length(\partial \Omega)/\area(\Omega)$, and the general theory\footnote{Strictly speaking, the results of \cite{RWY} are only for planar domains.} developed in \cite{RWY} leads to  \eqref{mean value in clusters} if we average over the entire spectrum.
Finer than that,  \eqref{mean value in clusters} asserts that for the hemisphere the same mean result holds in each cluster.

     \begin{figure}[ht]
\begin{center}
\includegraphics[height=50mm]{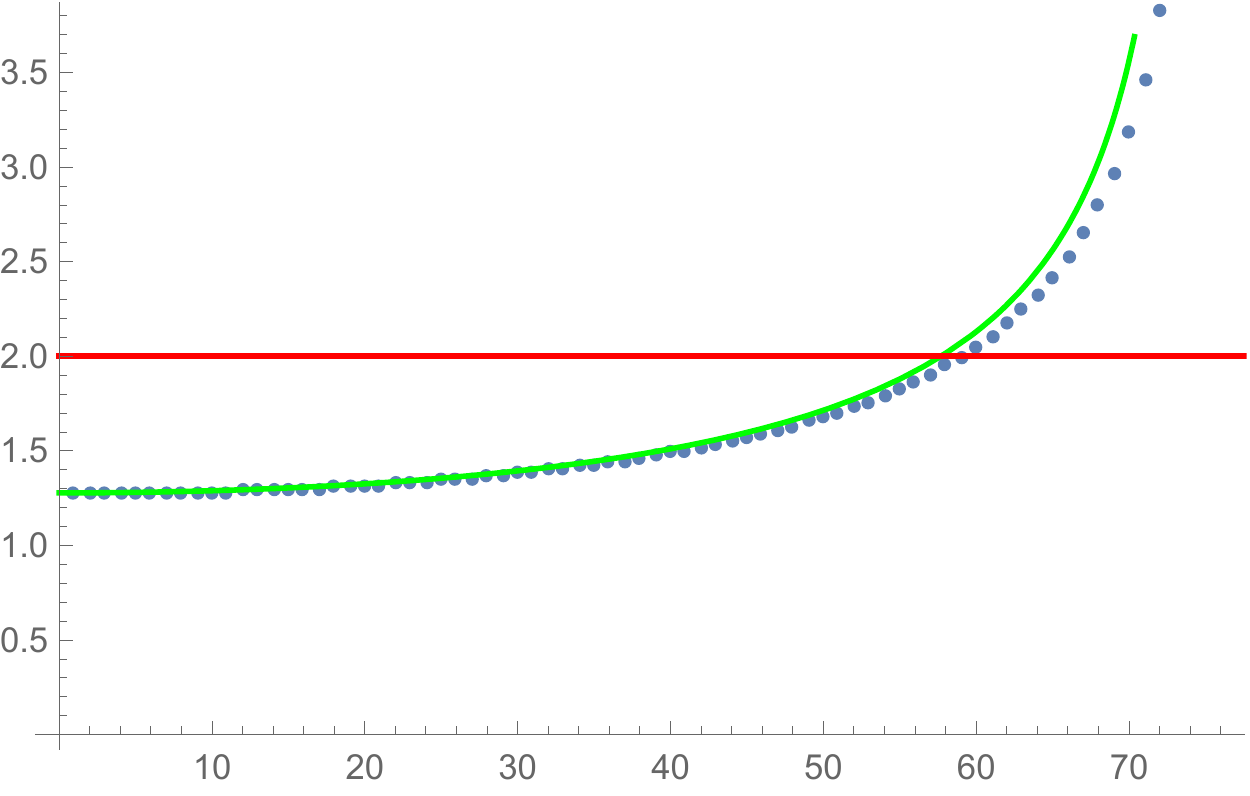}
\caption{The RN differences $d_{\ell,m}(\sigma)$ in the cluster $\dsE_\ell(\sigma)$ for $\ell=150$ and $\sigma=1$. The horizontal line (red) is their mean value $2$. The solid curve (green) is the theoretical formula \eqref{dlm formula}.  }
\label{fig:clustergaps150}
\end{center}
\end{figure}

The cluster structure that we find is similar in nature to that found for the spectrum of  operator $-\Delta+V$ on the unit sphere\footnote{Similar results are available for the spectrum of the Laplace Beltrami operator on Zoll surfaces, which are spheres equipped with a Riemannian metric  such that every geodesic is closed, and all geodesics have the same length. } $S^2$, for a smooth potential $V$ \cite{WeinsteinDuke, Widom}.
 The eigenvalues of  $-\Delta+V$  fall into clusters $C_\ell$ of diameter $O(1)$ around the eigenvalues $\ell(\ell+1)$ of the sphere   (in our case, the   clusters are  bigger, of diameter $\approx \sqrt{\ell}$), 
 and moreover the eigenvalues in each cluster $C_\ell$  become equidistributed  with respect to a suitable measure.

 \subsection{RN gaps}
We next examine the totality of the Robin-Neumann gaps $ d_{n}(\sigma):=\lambda_n(\sigma)-\lambda_n(0)$.

\begin{thm}
\label{thm:NR gaps bound}
 There are constants $0<c<C$ so that for each $\sigma>0$,
\begin{enumerate}[(a)]
\item \label{thm:NR gaps bound part a}  For all $n$,
 \[
 \lambda_n(\sigma)-\lambda_n(0) \leq C \lambda_n(0)^{1/4} \cdot \sigma .
\]
\item  \label{thm:NR gaps bound part b} There are arbitrarily large $n$ so that
  \[
 \lambda_n(\sigma)-\lambda_n(0) \geq c \lambda_n(0)^{1/4} \cdot \sigma .
\]
\end{enumerate}
\end{thm}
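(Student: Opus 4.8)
\emph{Sketch of the argument.}

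The plan is to reduce both bounds to sharp estimates on the within-cluster gaps $d_{\ell,m}(\sigma)$ and then to analyze the secular equation $S_m(\nu)=\sigma$ directly, especially near the edge $m=\ell$, which lies outside the regime of \eqref{dlm formula}. First I would observe that, since $\nu\mapsto\nu(\nu+1)$ is increasing and $\nu_{\ell,m}(\sigma)\in(\ell,\ell+1)$, each cluster obeys $\dsE_\ell(\sigma)\subset\big(\ell(\ell+1),(\ell+1)(\ell+2)\big)$; these intervals are pairwise disjoint and ordered in $\ell$, and $\#\dsE_\ell(\sigma)=\lfloor\ell/2\rfloor+1$ is the multiplicity of $\ell(\ell+1)$ in the desymmetrized Neumann spectrum, so the $n$-th Robin and $n$-th Neumann eigenvalues sit in clusters with the same $\ell$. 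Hence $d_n(\sigma)=d_{\ell,m}(\sigma)$ for the corresponding pair and $\lambda_n(0)=\ell(\ell+1)$, whence $\ell^{1/2}\le\lambda_n(0)^{1/4}\le 2^{1/4}\ell^{1/2}$ for $\ell\ge1$ (the ground state $\ell=0$ being a trivial exception). It then suffices to prove $d_{\ell,m}(\sigma)\le C\,\ell^{1/2}\sigma$ for all admissible $(\ell,m)$ with $\ell\ge1$, together with $d_{\ell,\ell}(\sigma)\ge c\,\ell^{1/2}\sigma$ for all $\ell$ large compared with $\sigma$ --- the latter realized at arbitrarily large $n$, namely the index of $\Lambda_{\ell,\ell}(\sigma)$ inside the $\ell$-th cluster.

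The heart of the matter is a uniform two-sided estimate for the secular function. Writing $\nu=\ell+t$ with $t\in(0,1)$ and using $m\equiv\ell\bmod2$, the tangent factor simplifies to $\tan(\pi t/2)>0$ and the ratio of Gamma functions becomes $h\!\big(\tfrac{\ell+m}2+\tfrac t2\big)\,h\!\big(\tfrac{\ell-m}2+\tfrac t2\big)$ with $h(x):=\Gamma(x+1)/\Gamma(x+1/2)$. Using the sharp bounds $\sqrt{x+1/4}\le h(x)\le\sqrt{x+1/2}$ (Kershaw), one has $\tfrac12\sqrt{x+1}\le h(x)\le\sqrt{x+1}$ on $[0,\infty)$, and therefore
\[
S_m(\ell+t)\ \asymp\ \tan\!\Big(\frac{\pi t}{2}\Big)\sqrt{(\ell+1)^2-m^2},\qquad 0<t<1,
\]
with absolute implied constants --- crucially, this remains valid up to the edge $m=\ell$, where the square root is $\asymp\sqrt\ell$ and the large Gamma factor supplies the $\asymp\sqrt\ell$ (there \eqref{dlm formula} gives no information). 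Recalling that $t\mapsto S_m(\ell+t)$ increases from $0$ to $\infty$ on $(0,1)$ --- the property already used to define $\nu_{\ell,m}(\sigma)$ --- the number $t_{\ell,m}(\sigma):=\nu_{\ell,m}(\sigma)-\ell$ is the unique root of $S_m(\ell+t)=\sigma$, and $d_{\ell,m}(\sigma)=t_{\ell,m}(\sigma)\big(2\ell+1+t_{\ell,m}(\sigma)\big)\asymp\ell\,t_{\ell,m}(\sigma)$ for $\ell\ge1$.

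I would then split according to the size of the root. Put $w:=\sqrt{(\ell+1)^2-m^2}$, so $\sqrt\ell\le\sqrt{2\ell+1}\le w$. If $t_{\ell,m}(\sigma)\le\tfrac12$, then $\tan(\pi t/2)\asymp t$ on that range, so the displayed estimate forces $t_{\ell,m}(\sigma)\asymp\sigma/w$, whence $d_{\ell,m}(\sigma)\asymp\ell\sigma/w\le\sqrt\ell\,\sigma$ (this also recovers \eqref{dlm formula} up to constants when $\ell-m\to\infty$); and when $m=\ell$, so $w\asymp\sqrt\ell$, it reads $d_{\ell,\ell}(\sigma)\asymp\sqrt\ell\,\sigma$, which is the desired lower bound and applies as soon as $\sigma\lesssim\sqrt\ell$, i.e.\ for all large $\ell$. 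If instead $t_{\ell,m}(\sigma)>\tfrac12$, then by monotonicity and the displayed estimate $\sigma=S_m(\ell+t_{\ell,m}(\sigma))\ge S_m(\ell+\tfrac12)\gtrsim w\gtrsim\sqrt\ell$, so $\ell\lesssim\sigma^2$ and hence $d_{\ell,m}(\sigma)<2\ell+2\lesssim\ell\le\sqrt\ell\,\sigma$. Combining the two cases proves part (a) for all $\ell\ge1$, and the first case at $m=\ell$ proves part (b).

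The main obstacle I anticipate is exactly this uniform control of $S_m$ over the full range of $(\ell,m)$, and through it part (b): the extremal gap in a cluster occurs at $m=\ell$, which the asymptotic \eqref{dlm formula} does not see, so one must return to the secular equation there and pin down the size $t_{\ell,\ell}(\sigma)\asymp\sigma/\sqrt\ell$ of its root with constants uniform in both $\sigma$ and $\ell$. Everything else --- the parity simplification of $S_m$, the Gamma-ratio inequalities, and the two-case split --- should be routine once that estimate is secured.
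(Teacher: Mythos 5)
Your argument is correct, and it reaches the same destination as the paper but through a slightly different key estimate. The overall strategy coincides: reduce to the within-cluster gaps $d_{\ell,m}(\sigma)$ by noting that the clusters $\dsE_\ell(\sigma)\subset\big(\ell(\ell+1),(\ell+1)(\ell+2)\big)$ are disjoint and have the same cardinalities as the Neumann multiplicities (the paper uses this index correspondence implicitly; you spell it out), and then control the root of the secular equation $S_m(\nu)=\sigma$ using its monotonicity on $(\ell,\ell+1)$ and Gautschi/Kershaw-type bounds on $G(s)=\Gamma(\tfrac s2+1)/\Gamma(\tfrac{s+1}2)$. The difference is in how the two bounds are extracted: the paper proves part (a) from the one-sided inequality \eqref{delta small}, $\delta\ll\sigma/\sqrt{\nu}$, and proves part (b) by invoking the asymptotic \eqref{asymp of delta m=ell}--\eqref{dlm formula m=ell} for the edge case $m=\ell$, i.e. $d_{\ell,\ell}(\sigma)\sim\frac{2\sigma}{\sqrt\pi}\sqrt\ell$, from the lemma of \S\ref{sec:mean value}; you instead establish a single uniform two-sided estimate $S_m(\ell+t)\asymp\tan(\pi t/2)\sqrt{(\ell+1)^2-m^2}$ for all $0<t<1$ and all admissible $m\le\ell$, and split according to whether the root satisfies $t\le\tfrac12$ or $t>\tfrac12$. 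Your route buys absolute implied constants uniform in $\sigma$ and an explicit treatment of the regime $\sqrt\ell\lesssim\sigma$ (where it falls back on the trivial bound $d_{\ell,m}<2\ell+2$), making the proof of (b) self-contained rather than dependent on the cluster-asymptotics lemma; the paper's asymptotic route yields in addition the sharp constant $2/\sqrt{\pi}$ in \eqref{dlm formula m=ell}, which your $\asymp$-analysis does not, but is not needed for the theorem. Both arguments share the same harmless gloss at the ground state $n=0$ (where $\lambda_0(0)=0$), which you at least flag explicitly.
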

In particular the Robin-Neumann gaps for the hemisphere are unbounded. We note that at this point, we do not know of any planar domain where the RN gaps are provably unbounded \cite{RWY}.
The upper bound is better than what is known for general smooth planar domains \cite{RWY}, which is $d_n(\sigma)\le C\lambda_n(0)^{1/3}\cdot \sigma$. 

 As a corollary to Theorem \ref{thm:NR gaps bound} we establish the limit {\em level spacing distribution} for the Robin spectrum, which is the distribution $P(s)$ (assuming it exists) of the nearest-neighbour gaps $\lambda_{n+1}(\sigma)-\lambda_n(\sigma)$, normalized to have mean unity (cf \S~\ref{sec:spacings}).
In the case of the Neumann spectrum on the hemisphere, most of the nearest neighbour gaps $\lambda_{n+1}(0)-\lambda_n(0)$ are zero and $P(s)$ is the delta function at the origin. We show that the Robin spectrum has the same level spacing distribution;
\begin{cor}\label{cor:spacings}
For every $\sigma> 0$, the level spacing distribution for the desymmetrized
 Robin spectrum on the hemisphere  is a delta-function at the origin.
\end{cor}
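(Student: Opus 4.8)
The plan is to deduce the corollary from Theorem \ref{thm:NR gaps bound}\eqref{thm:NR gaps bound part a} together with the known degeneracy structure of the Neumann spectrum. First I would recall precisely what ``level spacing distribution is a delta function at the origin'' means: writing $N(X)$ for the number of desymmetrized eigenvalues $\lambda_n(\sigma)\le X$, and letting $\delta_n(\sigma)=\lambda_{n+1}(\sigma)-\lambda_n(\sigma)$ be the consecutive gaps, the mean gap up to $X$ is $\sim X/N(X)$, and by Weyl's law (here $N(X)\sim c X$ for the hemisphere, since the desymmetrized Neumann multiplicity of $\ell(\ell+1)$ is $\lfloor \ell/2\rfloor +1 \asymp \ell \asymp \sqrt{X}$ summed over $\ell\le\sqrt X$, giving $N(X)\asymp X$) the mean spacing is bounded. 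The claim $P(s)=\delta_0(s)$ is then equivalent to: for every fixed $\epsilon>0$,
\[
\frac{1}{N(X)} \#\{ n\le N(X) : \lambda_{n+1}(\sigma)-\lambda_n(\sigma) > \epsilon \} \longrightarrow 0 \quad (X\to\infty).
\]
So it suffices to show that all but a vanishing proportion of consecutive Robin gaps are smaller than any fixed $\epsilon$.

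The key input is the cluster structure together with the upper bound. By the discussion preceding Proposition \ref{dlm formula}, the desymmetrized Robin spectrum is partitioned into clusters $\dsE_\ell(\sigma)$, each of cardinality $\lfloor\ell/2\rfloor+1$, with $\dsE_\ell(\sigma)$ concentrated near $\ell(\ell+1)$; quantitatively, every element of $\dsE_\ell(\sigma)$ lies within distance $\max_m d_{\ell,m}(\sigma) \le C\,(\ell(\ell+1))^{1/4}\sigma \asymp_\sigma \sqrt{\ell}$ of $\ell(\ell+1)$, by Theorem \ref{thm:NR gaps bound}\eqref{thm:NR gaps bound part a}. Hence the cluster $\dsE_\ell(\sigma)$ is contained in an interval of length $O_\sigma(\sqrt\ell)$ containing $\lfloor\ell/2\rfloor+1 \asymp \ell$ points. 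Therefore the \emph{average} consecutive gap inside $\dsE_\ell(\sigma)$ is $O_\sigma(\sqrt\ell/\ell)=O_\sigma(1/\sqrt\ell)\to 0$. Next I would convert this average statement into the counting statement above: within the cluster $\dsE_\ell(\sigma)$, the number of consecutive gaps exceeding $\epsilon$ is at most (total length)$/\epsilon = O_\sigma(\sqrt\ell/\epsilon)$, which is $o(\ell)=o(\#\dsE_\ell(\sigma))$. Summing over $\ell\le L$ (with $L\asymp\sqrt X$), the number of Robin eigenvalues $\le X$ whose gap to the next one exceeds $\epsilon$ is at most $\sum_{\ell\le L} O_\sigma(\sqrt\ell/\epsilon) = O_\sigma(L^{3/2}/\epsilon) = O_\sigma(X^{3/4}/\epsilon)$, which is $o(X)=o(N(X))$.

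One technical point to handle carefully is the \emph{boundary gaps} between consecutive clusters, i.e. the gap from the top element of $\dsE_\ell(\sigma)$ to the bottom element of $\dsE_{\ell+1}(\sigma)$: there are only $\asymp\sqrt X$ such gaps up to $X$, a negligible count, so even though each could be as large as $\asymp \sqrt\ell$ (rather than $o(\sqrt\ell)$) they contribute $o(N(X))$ exceptional $n$'s and do not affect the conclusion. I would also note that the argument does not require knowing $P(s)$ exists a priori: the estimate above shows directly that the empirical distribution of normalized spacings puts mass $1-o(1)$ on $[0,\epsilon']$ for every $\epsilon'>0$ (after rescaling by the mean spacing, which is $\Theta(1)$), hence converges weakly to $\delta_0$. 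The main obstacle is simply organizing the two-scale counting — points per cluster versus length per cluster — and dealing with the inter-cluster gaps; no new analytic input beyond Theorem \ref{thm:NR gaps bound}\eqref{thm:NR gaps bound part a} and Weyl's law is needed.
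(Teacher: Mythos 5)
Your proposal is correct and follows essentially the same route as the paper's proof: the same Chebyshev-type count of in-cluster gaps exceeding a fixed threshold (at most cluster diameter $\ll_\sigma \sqrt{\ell}$ divided by the threshold, summed to $O(N^{3/4})$), the same trivial count $O(\sqrt{N})$ of inter-cluster gaps, and the same reduction of the $\delta_0$ statement to showing the proportion of gaps exceeding any fixed $y>0$ vanishes.
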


  However, unlike in the Neumann or Dirichlet case, the delta function is not a result of multiplicities, as there are none here:
\begin{thm}\label{thm:nomult Robin intro}
Fix $\sigma>0$. Then the desymmetrized Robin spectrum is simple: $\lambda_m(\sigma)\neq \lambda_n(\sigma)$ for all $n \neq m$.
\end{thm}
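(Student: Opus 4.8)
The plan is to show that two distinct desymmetrized Robin eigenvalues cannot coincide by exploiting the explicit parametrization via the secular equation. Recall that every eigenvalue in the desymmetrized spectrum has the form $\Lambda_{\ell,m}(\sigma)=\nu_{\ell,m}(\sigma)(\nu_{\ell,m}(\sigma)+1)$ with $\nu_{\ell,m}(\sigma)\in(\ell,\ell+1)$. Since $t\mapsto t(t+1)$ is strictly increasing on $(0,\infty)$, a coincidence $\Lambda_{\ell,m}(\sigma)=\Lambda_{\ell',m'}(\sigma)$ with $(\ell,m)\neq(\ell',m')$ forces $\nu_{\ell,m}(\sigma)=\nu_{\ell',m'}(\sigma)=:\nu$. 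Because $\nu_{\ell,m}(\sigma)$ lies strictly inside the interval $(\ell,\ell+1)$, a common value $\nu$ can only be shared by parameters with the same $\ell$: indeed if $\ell\neq\ell'$ the intervals $(\ell,\ell+1)$ and $(\ell',\ell'+1)$ are disjoint. Hence we are reduced to showing $\nu_{\ell,m}(\sigma)\neq\nu_{\ell,m'}(\sigma)$ whenever $0\le m<m'\le\ell$ with $m\equiv m'\equiv\ell\pmod 2$; equivalently, that for a fixed $\nu\in(\ell,\ell+1)$ the value $S_m(\nu)$, viewed as a function of the admissible $m$ (those of the same parity as $\ell$ with $0\le m\le\ell$), is injective, since $\nu_{\ell,m}(\sigma)$ is characterized by $S_m(\nu_{\ell,m}(\sigma))=\sigma$.

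So the crux is the monotonicity (in $m$) of $S_m(\nu)$ at a fixed $\nu\in(\ell,\ell+1)$. Writing the secular function using the gamma factors, and noting that for $\nu\in(\ell,\ell+1)$ the sign of $\tan(\pi(m+\nu)/2)$ is controlled by the parity of $\ell$ and does not change as $m$ ranges over values of that parity, I would study the ratio $S_{m+2}(\nu)/S_m(\nu)$ (stepping by $2$ keeps us within the admissible set) and reduce it to a product of quotients of gamma values of the form $\Gamma(a+1)/\Gamma(a+1/2)$. Using the functional equation and the fact that $\Gamma$ is log-convex on $(0,\infty)$, one shows this ratio is strictly larger than $1$ in absolute value, so that $|S_m(\nu)|$ is strictly increasing in $m$; since the sign is constant, $S_m(\nu)$ itself is strictly monotone in $m$ along each parity class, hence injective. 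This gives $\nu_{\ell,m}(\sigma)\neq\nu_{\ell,m'}(\sigma)$ for $m\neq m'$ and completes the argument.

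An alternative, perhaps cleaner, route to the same reduction is to go back to the eigenfunctions: two eigenfunctions $e^{im\phi}\legP_{\nu}^m(\cos\theta)$ and $e^{im'\phi}\legP_{\nu'}^{m'}(\cos\theta)$ with the same Laplace eigenvalue must have $\nu(\nu+1)=\nu'(\nu'+1)$, hence $\nu=\nu'$; if moreover $m\neq m'$ they are genuinely different eigenfunctions lying in the same eigenspace, so one would then need to rule out such a two-dimensional (or higher) eigenspace. But the Robin eigenspace for a given $\nu$ is spanned by separated solutions with distinct $|m|$, and after desymmetrization ($m\ge 0$) the claim is precisely that no two admissible pairs $(\ell,m)\ne(\ell,m')$ give the same $\nu$ — which is again the injectivity of $m\mapsto S_m(\nu)$. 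So both routes converge on the same analytic fact.

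The main obstacle I anticipate is the gamma-function monotonicity estimate: one must pin down the sign of $S_m(\nu)$ for $\nu\in(\ell,\ell+1)$ (handling separately the degenerate cases $m=\ell$, where one of the arguments of $\tan$ makes it vanish or blow up, and small $\ell-m$), and then prove the strict inequality $|S_{m+2}(\nu)|>|S_m(\nu)|$ uniformly. This is elementary but delicate, because the ratio of gamma factors mixes terms that individually move in opposite directions as $m$ increases (the $\Gamma((\nu-m)/2+1)/\Gamma((\nu-m+1)/2)$ part decreases while the $\Gamma((\nu+m)/2+1)/\Gamma((\nu+m+1)/2)$ part increases), so one needs the net effect via log-convexity or an explicit Beta-integral representation of the relevant ratio. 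Once that inequality is in hand, the rest — reducing a spectral coincidence to equality of $\nu$'s, then to equality of $m$'s, and invoking the strict monotonicity of $t\mapsto t(t+1)$ — is straightforward bookkeeping.
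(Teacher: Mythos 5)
Your overall strategy is the same as the paper's: a coincidence of eigenvalues forces equal degrees $\nu$, hence equal $\ell$ (since $\nu_{\ell,m}(\sigma)\in(\ell,\ell+1)$ and these intervals are disjoint), and everything then reduces to comparing $S_{m+2}(\nu)$ with $S_m(\nu)$ at a fixed $\nu\in(\ell,\ell+1)$ — this is exactly the content of Proposition~\ref{prop:nomult Robin}. The problem is that your central inequality is asserted in the wrong direction and never actually proved. No log-convexity or Beta-integral is needed: since $\tan\bigl(\tfrac{\pi(m+2+\nu)}{2}\bigr)=\tan\bigl(\tfrac{\pi(m+\nu)}{2}\bigr)$, the recurrence $\Gamma(s+1)=s\Gamma(s)$ gives directly
\[
\frac{S_{m+2}(\nu)}{S_m(\nu)}
=\frac{\bigl(\tfrac{\nu+m}{2}+1\bigr)\cdot\tfrac{\nu-m-1}{2}}{\tfrac{\nu+m+1}{2}\cdot\tfrac{\nu-m}{2}}
=1-\frac{2(m+1)}{(\nu-m)(\nu+m+1)},
\]
which for $\nu>\ell\ge m+2$ lies strictly between $0$ and $1$. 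Thus $S_m(\nu)$ is strictly \emph{decreasing} in $m$ along the parity class, the opposite of your claim that $|S_{m+2}(\nu)|>|S_m(\nu)|$; your guess about the net effect of the two Gamma ratios is simply wrong. (Also, your worry about a degenerate case at $m=\ell$ is unfounded: for $\nu\in(\ell,\ell+1)$ and $m\equiv\ell\bmod 2$ one has $\tan\bigl(\tfrac{\pi(m+\nu)}{2}\bigr)=\tan\bigl(\tfrac{\pi(\nu-\ell)}{2}\bigr)\in(0,\infty)$ and all Gamma arguments positive, so $S_m(\nu)>0$ for every admissible $m$, as in Proposition~\ref{prop:bound on delta}.)

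Because all you need for simplicity is strict monotonicity of $m\mapsto S_m(\nu)$ in \emph{some} direction, your skeleton survives once the sign is corrected: if $\nu_{\ell,m}(\sigma)=\nu_{\ell,m'}(\sigma)=\nu$ then $S_m(\nu)=S_{m'}(\nu)=\sigma$, contradicting strict monotonicity. But as written the crux of your argument is an unproved claim whose stated direction is false, so the proposal has a genuine gap. Note also that the paper extracts slightly more: combining the ratio computation above with the monotonicity of $S_{m+2}$ in $\nu$ on $(\ell,\ell+1)$ yields the ordering $\nu_{\ell,m}(\sigma)<\nu_{\ell,m+2}(\sigma)$, which is later used to identify $m=\ell$ as producing the largest Robin--Neumann gap in each cluster.
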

We note that there are few deterministic simplicity results available, unlike generic simplicity which is more common, e.g. the Dirichlet spectrum of generic triangles is simple \cite{HilJudge}. For instance,  simplicity of the desymmetrized Dirichlet spectrum on the disk was proved by Siegel in 1929 (Bourget's hypothesis) \cite{Siegel}, and the same result holds for the Neumann spectrum \cite{Ashu}. However, there are arbitrarily small $\sigma>0$ for which the Robin spectrum on the disk has multiplicities \cite{Yesha}. For the square, we have a result analogous to Theorem~\ref{thm:nomult Robin intro} for $\sigma$ sufficiently small, but for rectangles with irrational squared aspect ratio, it fails for arbitrarily small $\sigma$ \cite{RWrectangles}.

Finally, we note that the theory developed here for the hemisphere is quite singular when compared to what we expect to hold for all other spherical caps. In that case we do not expect a cluster structure and moreover,   we believe that the level spacing distribution will be Poissonian ($P(s)=\exp(-s)$), as is expected for most integrable systems \cite{BerryTabor, ZRwhatisQC}, compare Figure~\ref{fig:cappiover3upto100spacings}.

\section{The Robin problem}
\subsection{Basics}
Denote by $\Omega$ the upper hemisphere  on the unit sphere, given in spherical coordinates as
\[
\Omega=\Big\{(\sin \theta \cos \varphi , \sin \theta \,\sin \varphi , \cos \theta)  : 0\leq \phi<2\pi , \;0\leq \theta\leq \pi/2 \Big\}
\]
so that the north pole is at $\theta=0$, and the equator, which is the boundary $\partial\Omega$, is at $\theta=\pi/2$.

 We consider the Robin boundary problem on the hemisphere $\Omega$:
$$
\Delta F+\nu(\nu+1)F=0, \quad  \frac{\partial F}{\partial n} +\sigma F=0
$$
with $\nu>0$, where $\partial/\partial n$ is the derivative in the direction of the outward pointing normal to the equator, and $\sigma>0$. We will call $\nu$ the ``degree'',
in keeping with the case of Dirichlet or Neumann boundary conditions, when the eigenfunctions are spherical harmonics of degree $\ell$, with eigenvalue $\ell(\ell+1)$.

 For $\sigma>0$, all  eigenvalues   $ \lambda=\nu(\nu+1)$   are positive, hence  $\nu$ is real and $\nu>0$ or $\nu<-1$.
Since the two solutions of $ \lambda=\nu(\nu+1)$ are $\nu$ and $-1-\nu$, we may assume that $\nu>0$.

The Laplacian commutes with rotations, hence the problem admits a separation of variables, according to symmetry under rotations $\{R_\phi\}$ around the north-south pole,
which defines ``sectors'' consisting of functions transforming as $F(R_\phi x) = e^{im\phi} F(x)$ (here $m\in \Z)$.
We  write such a Robin eigenfunction as
$$
F(\phi,\theta) = e^{im\phi } f_{\nu,m}(\cos \theta)
$$
where $f(x)$ is a solution of ($x:=\cos\theta$) 
\begin{equation} \label{form 1 of Legendre eq}
(1-x^2)f'' -2xf'+\left( \nu(\nu+1)-\frac{m^2}{1-x^2}\right) f =0 .
  \end{equation}
The Robin boundary condition $\sigma F +\frac{\partial F}{\partial n}=0$ is then translated to
\begin{equation}\label{robin bdry cond}
 \sigma f(0)-f'(0)=0  .
\end{equation}
Indeed, the equator is $\theta=\pi/2$, or $x=0$; and the normal derivative  (outward normal) is
$$
\frac{\partial}{\partial n}\Big|_{\theta=\pi/2}=-\frac{d}{dx}\Big|_{x=0} .
$$

 \subsection{Desymmetrization}
 Since the equation \eqref{form 1 of Legendre eq} is independent of the sign of $m$, we see that the Robin spectrum has a systematic double multiplicity. We will remove it (desymmetrization) by insisting that $m\geq 0$.
Note that this is equivalent to taking only eigenfunctions which are symmetric with respect to the reflection $(x,y,z)\mapsto (x,-y,z)$.
 We order the desymmetrized Neumann eigenvalues (including multiplicities) by
\[
\lambda_0=0<\lambda_1 =2<\lambda_2=\lambda_3=6<\dots
\]

\subsection{The eigenfunctions}
 The solutions of the differential equation \eqref{form 1 of Legendre eq} which are nonsingular  in   $0\leq x\leq 1$  form a one-dimensional space,
 all multiples of the associated Legendre functions (Ferrers functions) of the first kind $\legP_\nu^m$ \cite[14.3.4]{DLMF}
 \begin{equation}\label{rep of P as Ohyp}
 \begin{split}
\legP_\nu^m(x) &= (-1)^m \frac{\Gamma(\nu+m+1)}{2^m\Gamma(\nu-m+1)} (1-x^2)^{m/2}
\Ohyp  \left(\nu+m+1,m-\nu;m+1,\frac{1-x}2  \right)
\\
&=  (-1)^m \frac{\Gamma(\nu+m+1)}{2^m\Gamma(\nu-m+1)}
\left( \frac{1-x}{1+x}\right)^{m/2} \Ohyp\left(\nu+1,-\nu;m+1;\frac{1-x}2\right) .
\end{split}
\end{equation}
 Here  $\Ohyp\left(a,b;c;z\right)$ is Olver's hypergeometric series
 \[
 \Ohyp \left(a,b;c;z\right)=\sum_{s=0}^\infty \frac{(a)_s(b)_s}{\Gamma(c+s)s!} z^s, \quad |z|<1
 \]
 with $(a)_s=\Gamma(a+s)/\Gamma(a)$,
 so that \eqref{rep of P as Ohyp}   converges absolutely if   $x \in (-1,1]$, in particular in the range $x=\cos \theta\in [0,1]$ which is relevant for the hemisphere.

\section{The secular equation}
For integer $m\geq 0$, we set
\begin{equation}\label{sec eq m}
S_m(\nu) =  2\tan \left(\frac{\pi(m+\nu)}2\right)
  \frac{\Gamma\left(\frac{\nu+ m}2+1\right)\Gamma\left(\frac{\nu-m}2+1\right) }  { \Gamma\left(\frac{ \nu+m+1}{2}\right)
  \Gamma\left(\frac{ \nu -m +1}{2}\right)  } .
\end{equation}
Plots of $S_4(\nu)$ and $S_5(\nu)$ are displayed in figure~\ref{fig:seceqm5}.
\begin{figure}[ht]
\begin{center}
\includegraphics[height=50mm]{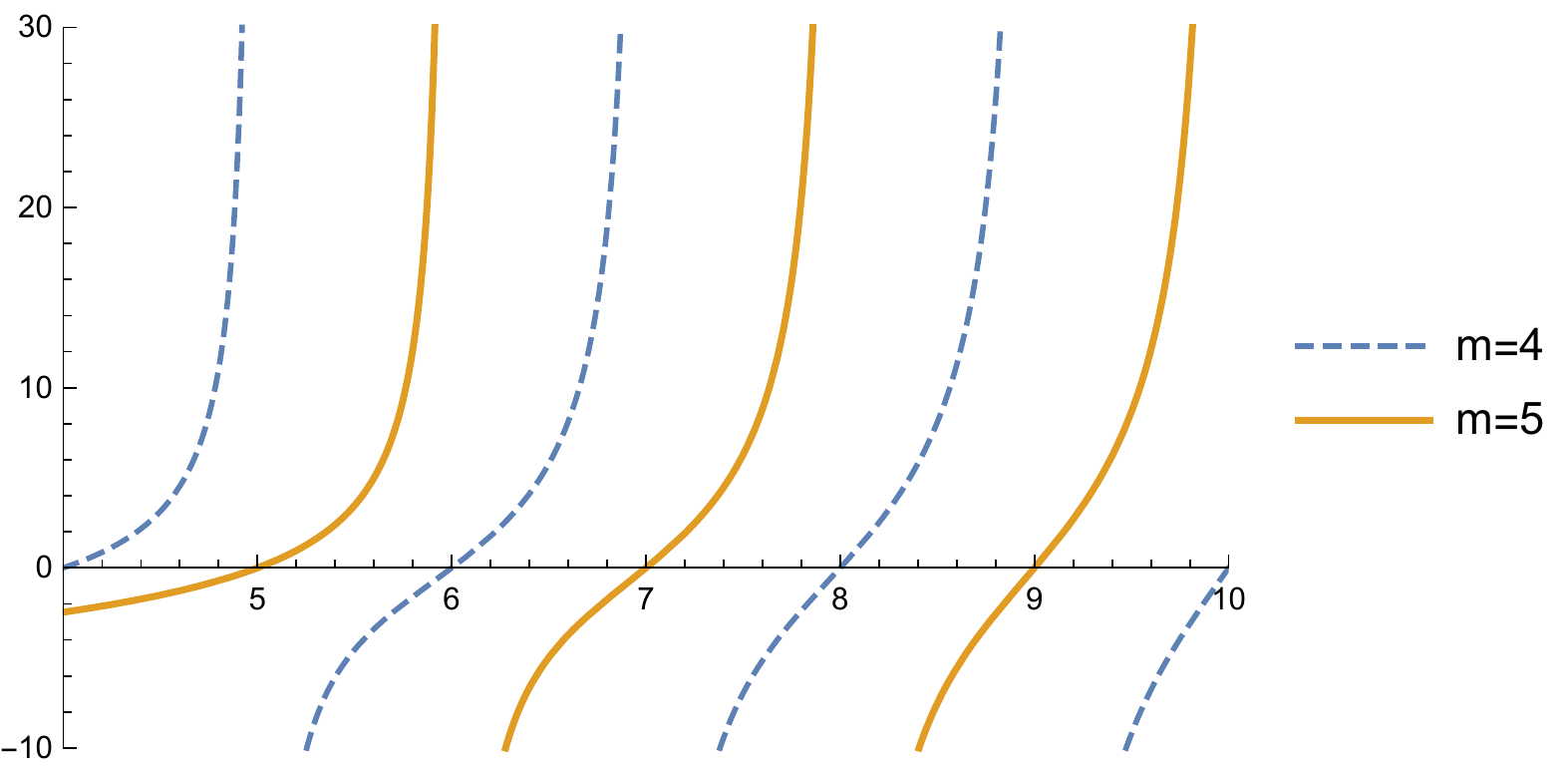}
\caption{ $S_4(\nu)$ (dashed) and $S_5(\nu)$ (solid). }
\label{fig:seceqm5}
\end{center}
\end{figure}



\begin{thm}\label{thm:def of nu}
Let $\sigma>0$.
\begin{enumerate}[(a)]
\item  For each $m\geq 0$, the degree $\nu>0$ for which the boundary value problem \eqref{form 1 of Legendre eq} and \eqref{robin bdry cond}   admits nonzero regular solutions satisfies
the secular equation
\begin{equation*}
S_m(\nu) = \sigma .
\end{equation*}

\item The secular equation  has no solutions in $0<\nu<m$.

\end{enumerate}
\end{thm}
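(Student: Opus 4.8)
The plan is to prove (a) by reducing the Robin condition to a ratio of special values of the Ferrers function and simplifying with $\Gamma$-function identities, and to prove (b) by an energy (Rayleigh-quotient) argument. For (a): by \eqref{rep of P as Ohyp} the solutions of \eqref{form 1 of Legendre eq} regular on $[0,1]$ form the one-dimensional space of multiples of $\legP_\nu^m$, so a nonzero regular solution is $f=c\,\legP_\nu^m$ with $c\neq0$, and \eqref{robin bdry cond} becomes $\sigma\,\legP_\nu^m(0)=(\legP_\nu^m)'(0)$. If $\legP_\nu^m(0)=0$ this forces $(\legP_\nu^m)'(0)=0$ as well, and since $x=0$ is an ordinary point of \eqref{form 1 of Legendre eq} we would get $f\equiv0$, contrary to assumption; hence $\legP_\nu^m(0)\neq0$ and $\sigma=(\legP_\nu^m)'(0)/\legP_\nu^m(0)$. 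Substituting the classical values at the origin from \cite[\S14.5]{DLMF},
\[
\legP_\nu^m(0)=\frac{2^m\sqrt{\pi}}{\Gamma\!\left(\tfrac{\nu-m}{2}+1\right)\Gamma\!\left(\tfrac{1-\nu-m}{2}\right)},\qquad
(\legP_\nu^m)'(0)=\frac{-2^{m+1}\sqrt{\pi}}{\Gamma\!\left(\tfrac{\nu-m+1}{2}\right)\Gamma\!\left(-\tfrac{\nu+m}{2}\right)},
\]
gives
\[
\sigma=\frac{-2\,\Gamma\!\left(\tfrac{\nu-m}{2}+1\right)\Gamma\!\left(\tfrac{1-\nu-m}{2}\right)}{\Gamma\!\left(\tfrac{\nu-m+1}{2}\right)\Gamma\!\left(-\tfrac{\nu+m}{2}\right)}.
\]
Setting $t=(\nu+m)/2$ and applying the reflection formula $\Gamma(z)\Gamma(1-z)=\pi/\sin\pi z$ at $z=\tfrac12-t$ and at $z=-t$ collapses the factor $-\Gamma(\tfrac12-t)/\Gamma(-t)$ to $\tan(\pi t)\,\Gamma(t+1)/\Gamma(t+\tfrac12)$, and feeding this back reproduces exactly the expression $S_m(\nu)$ of \eqref{sec eq m}. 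This proves (a); note that the same computation yields the identity $S_m(\nu)=(\legP_\nu^m)'(0)/\legP_\nu^m(0)$ for every $\nu$ with $\legP_\nu^m(0)\neq0$, which I will use below.

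For part (b), take $\nu$ and the regular solution $f$ real. Writing \eqref{form 1 of Legendre eq} as $\big((1-x^2)f'\big)'+\big(\nu(\nu+1)-\tfrac{m^2}{1-x^2}\big)f=0$, multiplying by $f$ and integrating over $(0,1)$: integration by parts kills the $x=1$ boundary term (since $1-x^2\to0$ while $f$ stays regular) and leaves $-f(0)f'(0)$ at $x=0$, so
\[
\nu(\nu+1)\int_0^1 f^2\,dx=f(0)f'(0)+\int_0^1(1-x^2)(f')^2\,dx+m^2\int_0^1\frac{f^2}{1-x^2}\,dx .
\]
The analytic heart of the matter is the sharp inequality
\[
\int_0^1(1-x^2)(f')^2\,dx+m^2\int_0^1\frac{f^2}{1-x^2}\,dx\ \ge\ m(m+1)\int_0^1 f^2\,dx .
\]

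I would prove this last inequality by the ground-state substitution $f=\psi g$ with $\psi(x):=(1-x^2)^{m/2}$: since $\psi$ solves \eqref{form 1 of Legendre eq} with $\nu=m$, a short computation gives the pointwise identity
\[
(1-x^2)(f')^2+\tfrac{m^2}{1-x^2}f^2=(1-x^2)\psi^2(g')^2+m(m+1)f^2+\tfrac{d}{dx}\!\big[(1-x^2)\psi\psi' g^2\big] ,
\]
whose total-derivative term integrates to $0$ (it vanishes at $x=1$ because $\psi\to0$, and at $x=0$ because $\psi'(0)=0$). Now apply the displayed identities to $f=\legP_\nu^m$ for $0<\nu<m$: assuming $\legP_\nu^m(0)\neq0$ (the finitely many other $\nu\in(0,m)$ are dealt with below), $f$ automatically satisfies $f'(0)=S_m(\nu)\,f(0)$ by part (a), so $S_m(\nu)\,f(0)^2\le\big(\nu(\nu+1)-m(m+1)\big)\int_0^1 f^2<0$, whence $S_m(\nu)<0$ and in particular $S_m(\nu)\neq\sigma$. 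If instead $\legP_\nu^m(0)=0$ but $\legP_\nu^m\not\equiv0$, uniqueness at the ordinary point $x=0$ gives $(\legP_\nu^m)'(0)\neq0$, so $S_m(\nu)=\pm\infty\neq\sigma$; and if $\legP_\nu^m\equiv0$ — which happens only for integer $\nu\in\{1,\dots,m-1\}$ — then $S_m(\nu)\le0$ by continuity from the nearby (dense) set of $\nu$ just treated. In all cases $S_m(\nu)\neq\sigma$ for $0<\nu<m$, $\sigma>0$, which is (b).

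The main obstacle is modest. In (a) it is the $\Gamma$-function bookkeeping — reflection, $\Gamma(z+1)=z\Gamma(z)$, duplication — together with getting the signs and normalization of the special values right, and confirming that the degenerate configurations ($\legP_\nu^m(0)=0$, or $\legP_\nu^m\equiv0$ when $\nu-m$ is a negative integer) produce no spurious points of the spectrum. In (b) the one substantive input is the sharp constant $m(m+1)$ in the Hardy-type inequality; once the ground-state substitution is written down and the boundary terms are seen to vanish, the remainder is immediate.
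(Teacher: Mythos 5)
Your part (a) follows essentially the same route as the paper: one-dimensionality of the regular solution space, the Robin condition as $(\legP_\nu^m)'(0)/\legP_\nu^m(0)=\sigma$, the DLMF values at $x=0$, and a reflection-formula simplification to reach \eqref{sec eq m}; your extra remark that $\legP_\nu^m(0)=0$ would force the trivial solution is a harmless (slightly more careful) addition. Part (b), however, is a genuinely different argument. The paper applies Euler's reflection formula once more to rewrite $S_m$ in the form \eqref{sec eq m neq 0}, where for $0<\nu<m$ all four Gamma factors have positive arguments, so $S_m(\nu)<0<\sigma$ immediately. You instead prove the negativity variationally: the Sturm--Liouville energy identity on $(0,1)$ together with the ground-state substitution $f=(1-x^2)^{m/2}g$ yields the Hardy-type inequality with constant $m(m+1)$, whence $S_m(\nu)\,f(0)^2\le\bigl(\nu(\nu+1)-m(m+1)\bigr)\int_0^1 f^2<0$. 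Both routes establish the same sign statement; the paper's is a two-line computation whose by-product \eqref{sec eq m neq 0} is reused later (and shows $S_m$ is real-analytic and negative on all of $(0,m)$), while yours exposes the spectral mechanism (the order-$m$ angular barrier forces degree at least $m$ once the boundary term has the favourable sign) and would survive perturbations where no closed Gamma-function evaluation exists. Two points in your write-up deserve a line of justification: the vanishing of the boundary terms at $x=1$ in both integrations by parts should be checked from $\legP_\nu^m\sim c\,(1-x^2)^{m/2}$ (they do vanish for $m\ge 1$, and $m=0$ makes (b) vacuous); and your appeal to ``continuity'' of $S_m$ at the integer points $\nu\in\{1,\dots,m-1\}$, where \eqref{sec eq m} is of indeterminate form $0\cdot\infty$, is not self-evident as stated --- it is justified either by noting the singularities are removable (as the paper's form \eqref{sec eq m neq 0} makes plain, giving $S_m<0$ there too) or simply by observing that such isolated points cannot be solutions of $S_m(\nu)=\sigma$ in any reading. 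These are minor repairs; the proposal is correct.
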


\begin{proof}
We saw that for all $\nu$, there is a one-dimensional space of solutions of the ODE \eqref{form 1 of Legendre eq} which are regular for $x\in [-1,1]$, spanned by the associated Legendre function $\legP_\nu^m(x)$.
The  boundary condition \eqref{robin bdry cond} gives the secular equation
\begin{equation*}
\frac{f_{\nu,m}'(0)}{f_{\nu,m}(0)} = \frac{ \left(\frac {d\legP_\nu^m}{dx}\right)(0)}{ \legP_\nu^m(0) } =
\sigma .
\end{equation*}
The values at $x=0$ of $\legP_\nu^m$ and its derivative are \cite[\S 14.5 (i)]{DLMF} 
\[
\legP_\nu^m(0)   =
\frac{ 2^m \sqrt{\pi}}{\Gamma\left(\frac{\nu-m}{2}+1\right)\Gamma\left(\frac{1-\nu-m}{2}\right)}
= \frac{2^{m}}{\sqrt{\pi}} \cos\left(\frac{\pi(\nu+m)}2\right) \frac{\Gamma\left(\frac{\nu+m+1}{2}\right)}{\Gamma\left(\frac{\nu-m}2+1\right)}
\]
and
\[
\left(\frac {d\legP_\nu^m}{dx}\right)(0)= -\frac{2^{m+1}\sqrt{\pi}}{\Gamma\left(\frac{\nu-m+1}{2}\right)\Gamma\left(-\frac{\nu+m}{2}\right)}
= \frac{2^{m+1}}{\sqrt{\pi}}\sin\left(\frac{\pi(m+\nu)}2\right)\frac{\Gamma\left(\frac{\nu+m}2+1\right)} { \Gamma\left(\frac{\nu-m+1}{2}\right)}
\]
and therefore
\[
\frac{ \left(\frac {d\legP_\nu^m}{dx}\right)(0)}{ \legP_\nu^m(0) }
= 2\tan \left(\frac{\pi(m+\nu)}2\right)
  \frac{\Gamma\left(\frac{\nu+ m}2+1\right)\Gamma\left(\frac{\nu-m}2+1\right) }{ \Gamma\left(\frac{ \nu+m+1}{2}\right)\Gamma\left(\frac{ \nu -m +1}{2}\right)  } .
\]
Hence we obtain the secular equation in the form $S_m(\nu)=\sigma$ with $S_{m}$ as in \eqref{sec eq m}.


We transform $S_m(\nu)$ by using Euler's reflection formula $\Gamma(s)\Gamma(1-s) = \pi/\sin(\pi s)$ to convert
\[
\begin{split}
 \frac{ \Gamma(\frac{\nu-m}2+1) }{   \Gamma(\frac{ \nu -m +1}{2})  } & =
\left(   \frac{\pi}{\sin ( \frac{\pi(m-\nu)}{2}) \Gamma(\frac{m-\nu}{2})}\right)/
\left(  \frac{\pi}{\sin\frac{\pi (\nu-m+1)}{2} \Gamma(1-\frac{\nu-m+1}{2})     }\right)
\\
&=  \frac{ \Gamma( \frac{m-\nu+1}2) }  {  \Gamma(\frac{m-\nu}{2}) }
\cdot \frac{\cos (\pi \frac{m-\nu}{2}) }{ \sin (\pi \frac{m-\nu}{2}) }
= \frac{ \Gamma( \frac{m-\nu+1}2) }  {  \Gamma(\frac{m-\nu}{2}) } \cot\left(\pi\frac{m-\nu}{2}\right)  .
\end{split}
\]
Moreover, for integer $m$,
\[
\tan \left(\frac{\pi(m+\nu)}2\right)\cdot  \cot\left(\frac{ \pi(m-\nu)}{2}\right)  = -1 .
\]
Thus we obtain
\begin{equation} \label{sec eq m neq 0}
    S_m(\nu) = -2 \frac{\Gamma(\frac{\nu+ m}2+1) \Gamma( \frac{m-\nu+1}2) }  {\Gamma(\frac{ m+\nu+1}{2})  \Gamma(\frac{m-\nu}{2}) }  .
 \end{equation}
The expression  \eqref{sec eq m neq 0}  allows us check that if $m\geq 1$, there is no solution for the secular equation if  $0<\nu<m$  (recall $\sigma>0$),
because the arguments of all the Gamma functions on the r.h.s. of \eqref{sec eq m neq 0} are positive
if $0<\nu<m$, hence so are the Gamma functions.
Therefore $S_m(\nu)$ is  {\em negative} for $\nu<m$. Thus for $0<\nu<m$ there is no solution of the secular equation if $\sigma>0$.
 \end{proof}


\begin{prop}\label{prop:bound on delta}
Fix $\sigma>0$. Then
\begin{enumerate}[(a)]
\item
$S_m$ vanishes at the points $m+2k$, with $k\geq 0$ integer, tends to infinity as $\nu\nearrow m+2k+1$, and
$S_m(\nu)$ is negative for $m+2k-1<\nu<m+2k$, positive in $m+2k<\nu<m+2k+1$ and increasing for $m+2k-1<\nu < m+2k+1$.

\item \label{def of nu(l,m)}
Let $\ell = m+2k$ with integer $k=0,1,2,\dots$. Then there is a unique solution $ \nu_{\ell,m}(\sigma)\in (\ell,\ell+1)$ of the secular equation.
\item
  Write $\nu_{\ell,m}(\sigma) = \ell+\delta_{\ell,m}(\sigma)$, with $\delta=\delta_{\ell,m}(\sigma)\in (0,1)$. Then
\begin{equation}\label{delta small}
  \delta <\frac{ \sqrt{\frac 2\pi}\sigma}{\sqrt{\nu}} .
  \end{equation}
\end{enumerate}
\end{prop}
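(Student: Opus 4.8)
The plan is to read all three parts off a close analysis of $S_m(\nu)$, carried out separately on $(0,m)$ and on the intervals $(m+2k-1,m+2k+1)$, and exploiting the two expressions for $S_m$: the defining one \eqref{sec eq m} (useful for $\nu>m$, where its gamma factors are regular and the $\tan$-factor controls zeros and poles) and the reflected one \eqref{sec eq m neq 0} from the proof of Theorem~\ref{thm:def of nu} (useful for $\nu<m$, where it has no spurious singularities). Part (b) will then be immediate from part (a) by the intermediate value theorem, and part (c) a quantitative sharpening, near the left endpoint $\nu=\ell$, of the root-finding in (b). For the zeros and poles: in \eqref{sec eq m} the four gamma arguments $\tfrac{\nu\pm m}{2}+1$, $\tfrac{\nu\pm m+1}{2}$ are positive for $\nu>m$, so on $(m,\infty)$ the zeros and poles of $S_m$ are exactly those of $\tan(\tfrac{\pi(m+\nu)}{2})$: zeros at $\nu=m+2k$, and $S_m(\nu)\to+\infty$ as $\nu\nearrow m+2k+1$ since the gamma quotient is positive there. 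For the sign on each subinterval, the same sign-count on the gamma factors as in the proof of Theorem~\ref{thm:def of nu}, applied to \eqref{sec eq m neq 0}, gives $S_m<0$ on $(0,m)$ (already Theorem~\ref{thm:def of nu}(b)) and on $(m+2k-1,m+2k)$, and $S_m>0$ on $(m+2k,m+2k+1)$.

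The substantive point in (a) is monotonicity, which I would obtain by showing $\tfrac{d}{d\nu}\log|S_m|$ has the same sign as $S_m$ on each interval (so $|S_m|$ moves in the direction making $S_m$ increasing), and then using continuity at the interior zero $\nu=m+2k$. Put $D(y):=\psi(y+\tfrac12)-\psi(y)$ with $\psi=\Gamma'/\Gamma$; then $D>0$ and $D$ is decreasing on $(0,\infty)$ (since $\psi$ is increasing and concave there). On $(0,m)$, differentiating $\log|S_m|$ from \eqref{sec eq m neq 0} gives $\tfrac12\big(D(\tfrac{m+\nu+1}{2})-D(\tfrac{m-\nu}{2})\big)$, which is negative because $\tfrac{m-\nu}{2}<\tfrac{m+\nu+1}{2}$ (and $\tfrac{m-\nu}{2}>0$ there); since $S_m<0$ here, $S_m$ is increasing. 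On $(m,\infty)$, differentiating from \eqref{sec eq m} gives $\tfrac{\pi}{\sin(\pi(m+\nu))}+\tfrac12 D(\tfrac{\nu+m+1}{2})+\tfrac12 D(\tfrac{\nu-m+1}{2})$; writing $\nu=j+s$ with $j\in\Z$, $s\in(0,1)$, the first term is $\tfrac{(-1)^{m+j}\pi}{\sin\pi s}$, which has the same sign as $S_m$ and absolute value $\ge\pi$. On the intervals where $S_m>0$ this already makes the derivative positive. On $(m+2k-1,m+2k)$ with $k\ge1$, where $S_m<0$, both arguments $\tfrac{\nu+m+1}{2},\tfrac{\nu-m+1}{2}$ exceed $1$, so each $D\le D(1)=2-2\log2<1<\pi$, and the derivative is negative, as needed.

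For part (b), on $(\ell,\ell+1)=(m+2k,m+2k+1)$ the function $S_m$ is continuous and strictly increasing by (a), with $S_m(\ell)=0<\sigma$ and $S_m(\nu)\to+\infty$ as $\nu\nearrow\ell+1$, so there is a unique root $\nu_{\ell,m}(\sigma)\in(\ell,\ell+1)$. For part (c), write $\nu=\ell+\delta=m+2k+\delta$, so $\tfrac{\pi(m+\nu)}{2}=\pi(m+k)+\tfrac{\pi\delta}{2}$, and using $\tan t\ge t$ on $[0,\tfrac\pi2)$ together with positivity on $(\ell,\ell+1)$ of the gamma quotient $G(\nu):=\dfrac{\Gamma(\frac{\nu+m}{2}+1)\Gamma(\frac{\nu-m}{2}+1)}{\Gamma(\frac{\nu+m+1}{2})\Gamma(\frac{\nu-m+1}{2})}$,
\[
\sigma = S_m(\nu) = 2\tan\!\Big(\tfrac{\pi\delta}{2}\Big)G(\nu) \ \ge\ \pi\delta\,G(\nu).
\]
Each factor of $G(\nu)$ is of the form $\Gamma(x+1)/\Gamma(x+\tfrac12)$ with $x=\tfrac{\nu+m}{2}>0$ resp.\ $x=\tfrac{\nu-m}{2}>0$, so by the Gautschi--Kershaw bound $\Gamma(x+1)/\Gamma(x+\tfrac12)>\sqrt{x+\tfrac14}$ and $\nu>\ell\ge m$,
\[
G(\nu) > \sqrt{\Big(\tfrac{\nu+m}{2}+\tfrac14\Big)\Big(\tfrac{\nu-m}{2}+\tfrac14\Big)} = \sqrt{\tfrac{\nu^2-m^2}{4}+\tfrac{\nu}{4}+\tfrac1{16}} > \sqrt{\tfrac{\nu}{4}} \ge \sqrt{\tfrac{\nu}{2\pi}} .
\]
Combining, $\sigma>\pi\delta\sqrt{\nu/(2\pi)}=\delta\sqrt{\pi\nu/2}$, i.e.\ $\delta<\sqrt{2/\pi}\,\sigma/\sqrt{\nu}$, which is \eqref{delta small}.

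The main obstacle is the monotonicity in (a): one is forced to switch between the two representations of $S_m$ (the reflected form \eqref{sec eq m neq 0} on $\nu<m$, where it cancels the gamma poles, and the defining form \eqref{sec eq m} beyond $\nu=m$, where the $\tan$-term supplies a contribution of absolute value $\ge\pi$ with the right sign) and to combine this with the monotonicity and crude upper bounds for $\psi(y+\tfrac12)-\psi(y)$; a clean estimate avoiding the digamma function seems awkward. A secondary subtlety: the cruder bound $\Gamma(x+1)/\Gamma(x+\tfrac12)>\sqrt{x}$ is \emph{not} enough for (c) in the cluster $\ell=m$, where $\delta$ can be arbitrarily small, so the slightly sharper $\sqrt{x+\tfrac14}$ (or a separate treatment of that cluster) is needed.
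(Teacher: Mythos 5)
Your proposal is correct, and its skeleton is the same as the paper's: factor $S_m(\nu)=2\tan\big(\tfrac{\pi(m+\nu)}2\big)G(\nu+m)G(\nu-m)$ with $G(s)=\Gamma(\tfrac s2+1)/\Gamma(\tfrac{s+1}2)$, prove monotonicity through the logarithmic derivative (your $\tfrac12 D\big(\tfrac{s+1}2\big)$ with $D(y)=\psi(y+\tfrac12)-\psi(y)$ is literally the paper's $G'/G(s)$, and in both arguments the $\pi/\sin$ term of size $\geq\pi$ dictates the sign), deduce (b) by the intermediate value theorem on $(\ell,\ell+1)$, and obtain (c) from $\tan(\pi\delta/2)\geq\pi\delta/2$ plus a lower bound of the form $\sqrt{\nu/(2\pi)}$ for the gamma quotient. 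You deviate in two technical places. First, on the leftmost negative subinterval ($k=0$, i.e.\ $\nu\in(m-1,m)$) you switch to the reflected form \eqref{sec eq m neq 0} and use that $D$ is positive and decreasing; this is in fact tidier than the paper's treatment, since the bound $0<G'/G<\tfrac12$ is established there only for positive arguments and so does not literally cover $G'/G(\nu-m)$ with $\nu-m\in(-1,0)$. Second, in (c) you apply Kershaw's refinement $\Gamma(x+1)/\Gamma(x+\tfrac12)>\sqrt{x+\tfrac14}$ to both factors, whereas the paper uses Gautschi's $G(s)>\sqrt{s/2}$ on the large factor and monotonicity together with $G(0)=1/\sqrt{\pi}$ on the possibly small factor $G(\nu-m)$; both routes yield exactly the constant $\sqrt{2/\pi}$ in \eqref{delta small}, and your observation that the cruder $\sqrt{x}$ bound fails in the cluster $\ell=m$ is precisely the issue the paper's $G(0)$ device circumvents, so if you wish to avoid citing Kershaw you can substitute that step with no loss.
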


\begin{proof}
We use $S_{m}$ in the form
 \begin{equation*}
S_m(\nu) =  2\tan \left(\frac{\pi(m+\nu)}2\right)  G(\nu+m)G(\nu-m)
\end{equation*}
where
\[
 G(s):= \frac{\Gamma(\frac{s}{2}+1)}{ \Gamma(\frac{s+1}{2})} .
 \]
 Note that $G(s)$ is positive for $s>0$.
 We have for $s>0$,
  \[
  G'(s) = \frac 12 G(s)\left( \psi\left( \frac s2 +1\right)-\psi\left(  \frac s2 +\frac 12\right) \right)
  \]
  with $\psi$ the digamma function \cite[5.9.16]{DLMF}
  \[
  \psi(s) :=\frac{\Gamma'(s)}{\Gamma(s) } =-\gamma+ \int_0^1\frac {    1-t^{s-1}}{1-t} dt , \quad \Re(s)>0
  \]
  so that
  \[
\frac{G'(s)}{G(s)} =\frac 12   \int_0^1 \frac{(1-t^{s/2})-(1-t^{(s-1)/2})}{1-t} dt =  \frac 12  \int_0^1 \frac{t^{(s-1)/2}}{1+\sqrt{t}} dt
  \]
  is clearly positive for $s>0$. Since $G(s)>0$ we deduce that $G'(s)>0$ for $s>0$, so that $G(s)$ is increasing, and
\begin{equation}
\label{eq:G'/G<1/2}
0< \frac{G'(s)}{G(s)} < \frac{1}{2}.
\end{equation}

The function $S_m(\nu)$ is positive for $m+2k<\nu<m+2k+1$ because both $G(\nu\pm m)$ are positive for $\nu>m$, and writing
$\nu = m+2k+\delta$ gives $\tan\frac{\pi}{2}(m+\nu)=\tan \frac{\pi}{2}\delta$ which is positive for $\delta\in (0,1)$, and negative for
$\delta\in (-1,0)$.

 The logarithmic derivative of $S_m$ is
 \begin{equation}
 \label{eq:Sm log der sum}
 \frac{S_m'}{S_m}(\nu) = \frac{\pi}{\sin \pi \delta} + \frac{G'}{G}(\nu-m) + \frac{G'}{G}(\nu+m).
 \end{equation}
 Since $G'/G>0$, we find that if $\delta\in (0,1)$ then $S_m'/S_m(\nu)>0$ and since $S_m(\nu)>0$ for all $v>m$ we obtain that $S_m'(\nu)>0$ for $\nu\in (m+2k,m+2k+1)$, so that $S_m$ is increasing there. Otherwise, if $\nu \in (m+2k-1,m+2k)$, then $\delta\in (-1,0)$, and we already know
 that here $S_{m}(\nu)<0$. Then, since in this range $\frac{\pi}{\sin \pi \delta} < -\pi$, the inequality \eqref{eq:G'/G<1/2} shows, with the
 use of the triangle inequality, that the r.h.s. of \eqref{eq:Sm log der sum} is
 \begin{equation*}
 \frac{\pi}{\sin \pi \delta} + \frac{G'}{G}(\nu-m) + \frac{G'}{G}(\nu+m) < -\pi+ \frac{1}{2}+\frac{1}{2} < 0,
 \end{equation*}
and so is the l.h.s. of \eqref{eq:Sm log der sum}, and then $S_m'(\nu) >0$.

Since $G(s)$ is positive and increasing, for $\nu>m$ we get
  \[
  G(\nu-m)\geq G(0) =\frac 1{\sqrt{\pi}} .
  \]
  By Stirling's formula
$ G(s)\sim \sqrt{\frac s2} +O(1/\sqrt{s})$ as $s\to \infty$, in fact \cite[5.6.4]{DLMF}
\begin{equation}\label{Gautschi Inequality}
\sqrt{\frac s2}<G(s)<\sqrt{\frac s2+1}, \quad s>0.
\end{equation}
   Also note
  \[
\tan \left(\frac{\pi(m+\nu)}2\right)  = \tan \pi\left(m+k+\frac{\delta}{2}\right)  = \tan \frac{\pi \delta}{2} \geq  \frac{\pi \delta}{2} .
\]
  We obtain
  \[
  \sigma  = 2\tan (\frac{\pi(m+\nu)}2)  G(\nu+m)G(\nu-m) > 2\frac{\pi \delta}{2}  \sqrt{\frac{\nu+m}{2}}G(0) \geq  \delta \sqrt{\frac \pi 2}\sqrt{\nu}
  \]
  so that $  \delta < \sqrt{\frac 2\pi}\sigma/\sqrt{\nu}$.
 \end{proof}

\begin{cor}
 Fix $\sigma>0$. For $\ell\geq m \geq 0$, $\ell=m\bmod 2$, let $\nu = \nu_{\ell,m}(\sigma)$ be the unique solution of the secular equation $S_m(\nu) = \sigma$ with $\nu\in (\ell,\ell+1)$.
 Write $\nu = \ell+\delta$, with $\delta\in (0,1)$. Then
\begin{enumerate}[a.]
\item As $\sigma \to 0$, $\delta\to 0$,

\item As $\sigma\to \infty$, we have $\delta\to 1$.
\end{enumerate}
  \end{cor}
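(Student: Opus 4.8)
The plan is to deduce both statements from Proposition~\ref{prop:bound on delta}, which already records the shape of $S_m$ on the interval $(\ell,\ell+1)$, so no new analytic input is needed.

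First I would observe that, by part~(a) of Proposition~\ref{prop:bound on delta}, on the half-open interval $[\ell,\ell+1)$ the function $S_m$ is continuous, strictly increasing, vanishes at the left endpoint $\nu=\ell$ (since $\ell=m+2k$), and satisfies $S_m(\nu)\to+\infty$ as $\nu\nearrow\ell+1$. Consequently $S_m$ restricts to a homeomorphism from $[\ell,\ell+1)$ onto $[0,\infty)$ whose inverse is continuous and strictly increasing, and by the definition of $\nu_{\ell,m}(\sigma)$ in Proposition~\ref{prop:bound on delta}\eqref{def of nu(l,m)} we have $\nu_{\ell,m}(\sigma)=S_m^{-1}(\sigma)$. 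Part~(a) is then immediate: letting $\sigma\to 0^+$ and using continuity of the inverse at $0$ gives $\nu_{\ell,m}(\sigma)\to S_m^{-1}(0)=\ell$, i.e.\ $\delta\to 0$. If a quantitative rate is desired one can instead invoke the bound \eqref{delta small}, which yields $\delta<\sqrt{2/\pi}\,\sigma/\sqrt{\ell}$ when $\ell\ge 1$ and $\delta^{3/2}<\sqrt{2/\pi}\,\sigma$ when $\ell=m=0$, so $\delta\to 0$ as $\sigma\to 0$ in every case.

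For part~(b) I would let $\sigma\to\infty$. Fix $\epsilon\in(0,1)$; since $S_m$ is continuous on the compact interval $[\ell,\ell+1-\epsilon]$ and increasing, it is bounded there by the finite value $S_m(\ell+1-\epsilon)$. Hence for every $\sigma>S_m(\ell+1-\epsilon)$ the identity $\sigma=S_m(\nu_{\ell,m}(\sigma))$ together with monotonicity forces $\nu_{\ell,m}(\sigma)>\ell+1-\epsilon$, i.e.\ $\delta>1-\epsilon$. As $\epsilon$ was arbitrary, $\delta\to 1$ as $\sigma\to\infty$.

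There is essentially no obstacle beyond quoting the correct endpoint behaviour of $S_m$; the only point worth double-checking is the divergence $S_m(\nu)\to+\infty$ as $\nu\nearrow\ell+1$. This is immediate from the form $S_m(\nu)=2\tan\!\big(\tfrac{\pi(m+\nu)}2\big)\,G(\nu+m)G(\nu-m)$ with $G$ as in the proof of Proposition~\ref{prop:bound on delta}: at $\nu=\ell+1=m+2k+1$ the argument $\tfrac{\pi(m+\nu)}2$ is an odd multiple of $\pi/2$, so the tangent factor has a pole and tends to $+\infty$ from the left (writing $\nu=m+2k+\delta$ one has $\tan\tfrac{\pi(m+\nu)}2=\tan\tfrac{\pi\delta}2\to+\infty$ as $\delta\nearrow 1$), while $G(\nu+m)G(\nu-m)$ stays finite and strictly positive there.
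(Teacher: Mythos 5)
Your argument is correct and follows essentially the same route as the paper: both rest entirely on the shape of $S_m$ established in Proposition~\ref{prop:bound on delta}, the paper proving (a) directly from the bound \eqref{delta small} and (b) from the explicit estimate $\sigma=S_m(\nu)\le 2\tan\left(\frac{\pi\delta}{2}\right)G(2m+2k+1)G(2k+1)\ll_{m,k}\tan\left(\frac{\pi\delta}{2}\right)$, of which your compactness/monotonicity (inverse-homeomorphism) argument is a soft reformulation. Your explicit treatment of the case $\ell=m=0$ in \eqref{delta small}, where $\nu=\delta$ and one gets $\delta^{3/2}<\sqrt{2/\pi}\,\sigma$, is a nice point that the paper leaves implicit.
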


Consequently, as $\sigma\to 0$, $\nu\to \ell $, while as $\sigma\to \infty$, $\nu\to \ell+1$. Thus, as $\sigma$ varies between $0$ and $+\infty$, $\Lambda_{\ell,m}(\sigma):=\nu_{\ell,m}(\sigma)\cdot (\nu_{\ell,m}(\sigma)+1)$ interpolates between a Neumann eigenvalue $\ell(\ell+1)$ with $\ell$ of the same parity as $m$, and a Dirichlet eigenvalue $(\ell+1)(\ell+2)$ with same $m$ and opposite parity between $\ell$ and $m$.
\begin{proof}
 That $\delta\to 0$ as $\sigma\to 0$ follows from \eqref{delta small}.
 Using monotonicity of $G(s)$ we obtain
  \[
  \sigma=S_m(\nu) \leq 2 \tan \frac{\pi \delta}{2} G(2m+2k+1)G(2k+1) \ll_{m,k} \tan \frac{\pi \delta}{2}
  \]
  so that as $\sigma\to \infty$, we have $\delta\to 1$.
  \end{proof}

 \section{Multiplicity one}\label{sec:multiplicity one}

We have seen (Theorem~\ref{thm:def of nu}) that the desymmetrized Robin spectrum of the hemisphere is given by the energies
\begin{equation}\label{def of Lambda}
 \Lambda_{\ell,m}(\sigma) = \nu_{\ell,m}(\sigma)\cdot (\nu_{\ell,m}(\sigma)+1)
 \end{equation}
 with $\ell\ge 0$, and $0\le m\le \ell$ satisfying $m\equiv \ell \mod 2$, satisfying  the secular equation $S_m(\nu)=\sigma$, with $S_m$ given by
\eqref{sec eq m}:
\begin{equation*}
S_m(\nu) =  2\tan \left(\frac{\pi(m+\nu)}2\right)
  \frac{\Gamma(\frac{\nu+ m}2+1)\Gamma(\frac{\nu-m}2+1) }  { \Gamma(\frac{ \nu+m+1}{2})  \Gamma(\frac{ \nu -m +1}{2})  } .
\end{equation*}

To show that there are no degeneracies in the desymmetrized spectrum (Theorem~\ref{thm:nomult Robin intro}), it therefore suffices to prove:
\begin{prop}
\label{prop:nomult Robin}
Fix $\sigma>0$. For all $\ell\ge 2$ and $0\le m\le \ell-2$ with $m\equiv \ell \bmod 2$,
\begin{equation*}
\nu_{\ell,m+2}(\sigma)>\nu_{\ell,m}(\sigma).
\end{equation*}
\end{prop}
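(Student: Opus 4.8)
The plan is to establish the stronger \emph{pointwise} inequality
\[
S_{m+2}(\nu) < S_m(\nu) \qquad \text{for all } \nu\in(\ell,\ell+1),
\]
and then read off the ordering of the roots from monotonicity. Since $\ell\ge m+2$ and $\ell\equiv m+2\equiv m\bmod 2$, Proposition~\ref{prop:bound on delta} applies to \emph{both} $S_m$ and $S_{m+2}$ on the interval $(\ell,\ell+1)$: each is positive and strictly increasing there, vanishing as $\nu\searrow\ell$ and tending to $+\infty$ as $\nu\nearrow\ell+1$, so each of the equations $S_m(\nu)=\sigma$ and $S_{m+2}(\nu)=\sigma$ has a unique root in $(\ell,\ell+1)$, namely $\nu_{\ell,m}(\sigma)$ and $\nu_{\ell,m+2}(\sigma)$ (the latter is Proposition~\ref{prop:bound on delta}(b) with $k$ replaced by $k-1\ge 0$). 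Granting the pointwise inequality, at $\nu=\nu_{\ell,m}(\sigma)$ we get $S_{m+2}(\nu_{\ell,m}(\sigma)) < \sigma = S_{m+2}(\nu_{\ell,m+2}(\sigma))$, and strict monotonicity of $S_{m+2}$ on $(\ell,\ell+1)$ then forces $\nu_{\ell,m}(\sigma) < \nu_{\ell,m+2}(\sigma)$, which is the claim.

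To prove the pointwise inequality I would work with the representation $S_m(\nu) = 2\tan\!\big(\tfrac{\pi(m+\nu)}2\big)\,G(\nu+m)\,G(\nu-m)$ used in the proof of Proposition~\ref{prop:bound on delta}, where $G(s)=\Gamma(s/2+1)/\Gamma((s+1)/2)$. Writing $\nu=\ell+\delta=m+2k+\delta$, one has $m+\nu=2m+2k+\delta$ and $(m+2)+\nu=2m+2+2k+\delta$, so in both cases $\tan\!\big(\tfrac{\pi(\cdot+\nu)}2\big)=\tan\tfrac{\pi\delta}2$; the transcendental factor cancels in the quotient, leaving
\[
\frac{S_{m+2}(\nu)}{S_m(\nu)} \;=\; \frac{G(\nu+m+2)\,G(\nu-m-2)}{G(\nu+m)\,G(\nu-m)},
\]
which makes sense since all four arguments are positive once $\nu>\ell\ge m+2$.

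Finally I would simplify using the two-step recursion $G(s+2)=\tfrac{s+2}{s+1}\,G(s)$ (immediate from $\Gamma(z+1)=z\Gamma(z)$), which gives $G(\nu+m+2)=\tfrac{\nu+m+2}{\nu+m+1}G(\nu+m)$ and $G(\nu-m-2)=\tfrac{\nu-m-1}{\nu-m}G(\nu-m)$, hence
\[
\frac{S_{m+2}(\nu)}{S_m(\nu)} \;=\; \frac{(\nu+m+2)(\nu-m-1)}{(\nu+m+1)(\nu-m)}.
\]
Since $(\nu+m+1)(\nu-m)-(\nu+m+2)(\nu-m-1)=2(m+1)>0$ and both denominators are positive, this ratio is $<1$, establishing the pointwise inequality.

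I do not expect a genuine obstacle: the whole argument rests on the observation that the tangent factors of $S_m$ and $S_{m+2}$ coincide pointwise on $(\ell,\ell+1)$, after which the ratio of the $\Gamma$-quotients telescopes to an elementary rational function that is manifestly less than one. The only points requiring care are the parity bookkeeping that makes the tangents agree, and the (routine) verification that one is genuinely comparing roots of two functions that are both strictly increasing on the \emph{same} interval $(\ell,\ell+1)$ — both of which are supplied by Proposition~\ref{prop:bound on delta}.
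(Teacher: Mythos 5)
Your proposal is correct and follows essentially the same route as the paper: both establish the pointwise inequality $S_{m+2}(\nu)<S_m(\nu)$ on $(\ell,\ell+1)$ by computing the ratio via the recurrence $\Gamma(s+1)=s\Gamma(s)$ (your $G$-recursion is the same computation), obtaining $\frac{S_{m+2}(\nu)}{S_m(\nu)}=1-\frac{2(m+1)}{(\nu-m)(\nu+m+1)}<1$, and then conclude from the positivity and strict monotonicity of $S_{m+2}$ on $(\ell,\ell+1)$ supplied by Proposition~\ref{prop:bound on delta}. No gaps; the parity bookkeeping and the cancellation of the tangent factors are handled correctly.
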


\begin{figure}[ht]
\begin{center}
\includegraphics[height=60mm]{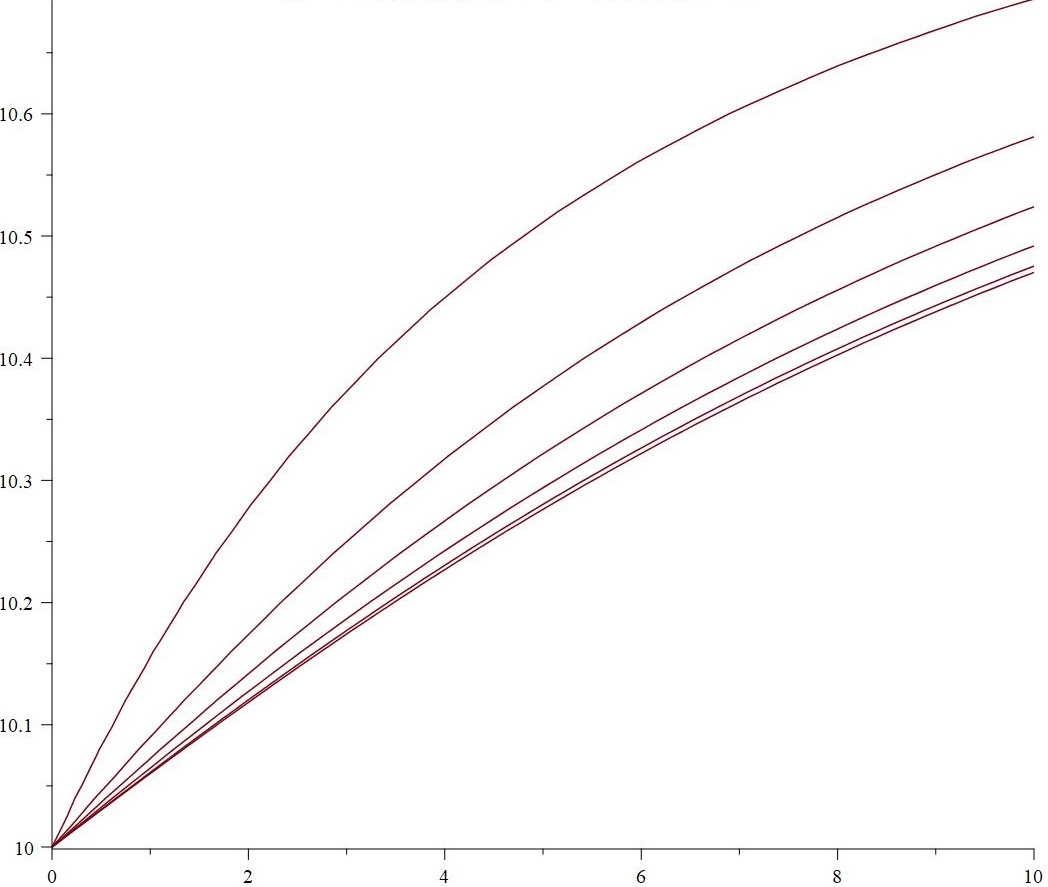}
\caption{Plots of $\nu_{10,m}$, $m=0,2,4,6,8,10$ on $[0,10]$. As asserted by Proposition \ref{prop:nomult Robin},
higher curves correspond to larger value of $m$.}
\label{fig:nu10m 10}
\end{center}
\end{figure}

The picture emerging for $\nu_{10,m}(\sigma)$ on $[0,10]$, with all possible $0\le m\le 10$, $m\equiv \ell\mod{2}$,
is displayed within figures \ref{fig:nu10m 10}. This clearly support the statement of Proposition \ref{prop:nomult Robin}.

\begin{proof}


Recall that $\nu_{\ell,m}(\sigma) \in (\ell,\ell+1)$, and that $\ell=m+2k$, $k\geq 0$.
By Proposition~\ref{prop:bound on delta}, both $S_m(\nu)$ and $S_{m+2}(\nu)$ are increasing and positive in $(\ell,\ell+1)$.
Using the recurrence $\Gamma(s+1)=s\Gamma(s)$ we find
\begin{equation*}
\frac{ S_{m+2}(\nu)}{S_m(\nu)}  = \frac{ \frac{\nu+m}{2}+1 \cdot \frac{\nu-m-1}{2}}{ \frac{\nu+m+1}{2} \cdot \frac{\nu-m}{2}} = 1-\frac{2(m+1)}{(\nu-m)(\nu+m+1)}<1.
\end{equation*}
Hence for $\nu\in (\ell,\ell+1)$, where both
$S_m(\nu)$ and $S_{m+2}(\nu)$ are positive, we must have $S_{m+2}(\nu)<S_{m}(\nu)$.
Therefore
$$S_{m+2}(\nu_{\ell,m}(\sigma)) < S_{m}(\nu_{\ell,m}(\sigma))=\sigma
=S_{m+2}(\nu_{\ell,m+2}(\sigma)).$$
Since $S_{m+2}$ is increasing in $(\ell,\ell+1)$, we deduce that $\nu_{\ell,m}(\sigma) < \nu_{\ell,m+2}(\sigma)$ as claimed.
 \end{proof}

 \section{Clusters and a Szeg\H{o} type limit theorem}\label{sec:clusters}
\subsection{Cluster structure}
Denote the cluster (a multiset) of desymmetrized multiple Neumann eigenvalues sharing a common value of $\ell(\ell+1)$ by
\[
\dsE_\ell(0)=\Big\{\ell(\ell+1): 0\leq m\leq \ell, m=\ell \mod 2  \Big\}.
\]
This cluster has size $\#\dsE_\ell(0) =\lfloor \ell/2 \rfloor +1$. We label the eigenvalues there by
\[
\dsE_\ell(0) = \left\{ \lambda_L,\lambda_{L+1},\dots,\lambda_{L+\lfloor \ell/2\rfloor } \right\}
\]
where $L=L_\ell$ is given by
\[
L=\#\left( \dsE_0(0) \cup \dsE_1(0)\cup\dots \cup \dsE_{\ell-1}(0)\right) = \sum_{\ell'=0}^{\ell-1} \left\lfloor \frac {\ell'}2 \right\rfloor+1 = \frac{\ell^2}{4}+O(\ell).
\]

The distance of the Neumann eigenvalue cluster $\dsE_\ell(0)$ to the closest other Neumann eigenvalue cluster, which for $\ell\geq 1$ is $\dsE_{\ell-1}(0)$ (in other words, the distance between distinct nearby Neumann eigenvalues), is
\begin{equation}\label{dist between clusters at 0}
\min_{ \ell' : \ell'\neq \ell}\dist \Big( \dsE_\ell(0), \dsE_{\ell'}(0)\Big)  = \ell(\ell+1)-(\ell -1)\ell = 2\ell .
\end{equation}

We saw that the Robin eigenvalues are $\nu(\nu+1)$ where $\nu=\nu_{\ell,m}(\sigma)\in (\ell,\ell+1)$, $\ell=m\bmod 2$,
is a solution of the secular equation $S_m(\nu) = \sigma$.
Denote by
\begin{equation}
\label{eq:clusters def}
\dsE_\ell(\sigma) = \{\Lambda_{\ell,m}(\sigma):\ell\geq m\geq 0, \ell  = m\bmod 2 \}
\end{equation}
which is the evolution of the  Neumann eigenvalue cluster  $\dsE_\ell(0)$.  Since $\ell<\nu_{\ell,m}(\sigma) < \ell+1$, the spectral
cluster  $\dsE_\ell(\sigma)$ is contained in the  open interval $\Big( \ell(\ell+1), (\ell+1)(\ell+2) \Big)$,
and in particular the evolved eigenvalue clusters $\dsE_\ell(\sigma)$  do not mix with each other.

 \subsection{Asymptotics of the Robin-Neumann gaps}\label{sec:mean value}


Recall that we write $\nu_{\ell,m}(\sigma) = \ell +\delta_{\ell,m}(\sigma)$.
\begin{lem}
As $\ell \to \infty$, with $0\leq m< \ell$, $\ell=m\bmod 2$, 
\begin{equation}\label{asymp of delta}
  \delta_{\ell,m}(\sigma) = \frac{ 2\sigma}{\pi \sqrt{\ell^2-m^2}}\left(1+O\left( \frac 1{\ell-m} \right) \right).
\end{equation}
For $m=\ell$, we have
\begin{equation}\label{asymp of delta m=ell} 
\delta_{\ell,\ell}(\sigma)  \sim \frac{\sigma}{\sqrt{\pi \ell}}.
\end{equation}
 
\end{lem}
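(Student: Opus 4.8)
The plan is to derive both asymptotics directly from the secular equation $S_m(\nu)=\sigma$, evaluated at $\nu=\nu_{\ell,m}(\sigma)=\ell+\delta$, by extracting precise asymptotics for each of the three factors in
\[
S_m(\nu) = 2\tan\!\left(\tfrac{\pi\delta}{2}\right) G(\nu+m)\,G(\nu-m),
\qquad G(s)=\frac{\Gamma(s/2+1)}{\Gamma((s+1)/2)},
\]
where I have used $\tan(\pi(m+\nu)/2)=\tan(\pi\delta/2)$ as in the proof of Proposition~\ref{prop:bound on delta}. Since Proposition~\ref{prop:bound on delta}(c) already gives $\delta \le \sqrt{2/\pi}\,\sigma/\sqrt\nu \to 0$, we have $\tan(\pi\delta/2)=\frac{\pi\delta}{2}(1+O(\delta^2))$, and the task reduces to pinning down $G(\nu+m)G(\nu-m)$.

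For the main case $0\le m<\ell$ with $\ell-m\to\infty$: here both arguments $\nu\pm m$ tend to infinity, so I would use Stirling (or, more cleanly, the Gautschi-type bounds \eqref{Gautschi Inequality}, $\sqrt{s/2}<G(s)<\sqrt{s/2+1}$) to write $G(\nu+m)G(\nu-m) = \tfrac12\sqrt{(\nu+m)(\nu-m)}\,(1+O(1/(\nu-m)))=\tfrac12\sqrt{\nu^2-m^2}\,(1+O(1/(\ell-m)))$. Then $\nu^2-m^2=(\ell+\delta)^2-m^2 = \ell^2-m^2 + O(\ell\delta)$, and since $\delta=O(\sigma/\sqrt\ell)$ by \eqref{delta small}, the correction is $O(\ell/\sqrt\ell)=O(\sqrt\ell)$ relative to $\ell^2-m^2$; I need $\ell-m$ not too small for this to be absorbed into the stated $O(1/(\ell-m))$, but in fact $\ell^2-m^2=(\ell-m)(\ell+m)\ge (\ell-m)\ell$, so $O(\sqrt\ell)/(\ell^2-m^2)=O(1/(\sqrt\ell(\ell-m)))$, comfortably inside $O(1/(\ell-m))$. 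Plugging back, $\sigma = 2\cdot\frac{\pi\delta}{2}\cdot\frac12\sqrt{\ell^2-m^2}\,(1+O(1/(\ell-m)))$, which rearranges to \eqref{asymp of delta}. One has to be a little careful stating the error uniformly, but replacing $\nu\pm m$ by $\ell\pm m$ inside $G$ and tracking that $G'/G(s)=O(1/s)$ (from the integral formula for $G'/G$ in the proof of Proposition~\ref{prop:bound on delta}) makes this routine.

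For the boundary case $m=\ell$: now $\nu-m=\delta\in(0,1)$ stays bounded, so $G(\nu-m)=G(\delta)\to G(0)=1/\sqrt\pi$ as $\delta\to0$ (continuity of $G$, since $\Gamma$ is continuous and nonzero there), while $G(\nu+m)=G(2\ell+\delta)=\sqrt{\ell}\,(1+o(1))$ by \eqref{Gautschi Inequality} (as $\sqrt{(2\ell+\delta)/2}=\sqrt{\ell+\delta/2}\sim\sqrt\ell$). Hence $\sigma = 2\cdot\frac{\pi\delta}{2}(1+o(1))\cdot\sqrt\ell\cdot\frac1{\sqrt\pi}(1+o(1)) = \delta\sqrt{\pi\ell}\,(1+o(1))$, giving $\delta_{\ell,\ell}(\sigma)\sim\sigma/\sqrt{\pi\ell}$ as claimed in \eqref{asymp of delta m=ell}.

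The main obstacle is purely bookkeeping: making the error term $O(1/(\ell-m))$ in \eqref{asymp of delta} genuinely \emph{uniform} in $m$ across the whole range $0\le m<\ell$ as $\ell\to\infty$, including the transitional regime where $\ell-m$ is large but $m$ is comparable to $\ell$. The clean way is to note $G'/G(s)=\frac12\int_0^1 \frac{t^{(s-1)/2}}{1+\sqrt t}\,dt \le \frac12\int_0^1 t^{(s-1)/2}\,dt = \frac1{2(s+1)}$, so $\log(G(s)/\sqrt{s/2})$ is a bounded function with derivative $O(1/s^2)$, yielding $G(s)=\sqrt{s/2}\,(1+O(1/s))$ with absolute implied constant; applied at $s=\nu\pm m$ and combined with $\nu=\ell+O(\delta)$, everything collapses to \eqref{asymp of delta}. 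No genuinely hard analytic input is needed beyond what the excerpt already established.
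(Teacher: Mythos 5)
Your proposal is correct and follows essentially the same route as the paper: both write the secular equation as $2\tan(\pi\delta/2)\,G(\nu+m)G(\nu-m)=\sigma$, feed in the a priori bound \eqref{delta small} to linearize the tangent, use $G(s)=\sqrt{s/2}\,(1+O(1/s))$ (via \eqref{Gautschi Inequality}/Stirling) to solve for $\delta$, and treat $m=\ell$ separately using $G(0)=1/\sqrt{\pi}$; the paper simply disposes of the regime $\ell-m=O(1)$ by observing the claim there reduces to \eqref{delta small}, which your uniform version of the $G$-asymptotic also covers. (Your final evaluation $\tfrac12\int_0^1 t^{(s-1)/2}dt=\tfrac1{2(s+1)}$ should read $\tfrac1{s+1}$, an immaterial constant.)
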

\begin{proof}
For $0<\ell-m=O(1)$, \eqref{asymp of delta}  is just the upper bound \eqref{delta small}, so  assume $\ell-m\to \infty$.
The cluster $\dsE_\ell(\sigma)$ consists of $\lfloor \ell/2\rfloor +1$ eigenvalues $\Lambda_{\ell,m}(\sigma) = \nu_{\ell,m}(\nu_{\ell,m}+1)$ with  $ m+2k=\ell$, $m,k\geq 0$, and where $\nu_{\ell,m}(\sigma)$ is the unique solution of the secular equation $S_m(\nu)=\sigma$ in the interval $(\ell,\ell+1)$.
We write
$$\nu=\nu_{\ell,m}(\sigma)= \ell+\delta = m+2k+\delta, \qquad \delta=\delta_{\ell,m}(\sigma).$$

Recall that the $S_{m}$ of the secular equation $S_{m}(\nu)=\sigma$ is given by
\begin{equation}\label{sec eq m again}
S_m(\nu) =  2\tan \left(\frac{\pi(m+\nu)}2\right)  G(\nu+m)G(\nu-m)
\end{equation}
where $ G(s)= \Gamma(\frac{s}{2}+1)/ \Gamma(\frac{s+1}{2})$  satisfies (cf. \eqref{sec eq m})
\[
G(s) = \sqrt{\frac s2} \left(1+O\left(\frac 1{s}\right) \right),\quad s\to \infty.
\]
Since we assume that $ \ell-m=2k\to \infty$,  both arguments of $G$ in \eqref{sec eq m again} tend to infinity, because
$\nu+m = 2k+2m+\delta=\ell+m+\delta$ and $\nu-m = 2k+\delta =\ell-m+\delta$.
Moreover,
\[
\tan\frac \pi 2 (\nu+m) = \tan \frac \pi 2 \delta
\]
and we  know (Proposition~\ref{prop:bound on delta}) that
 \begin{equation}\label{delta small b}
  \delta\ll \sigma/\sqrt{\ell} \to 0
  \end{equation}
so that
\[
\tan\frac \pi 2 (\nu+m) = \tan \frac \pi 2 \delta =\frac \pi 2 \delta +O\left(\frac 1{\ell^{3/2}}\right) .
\]
Therefore we can write
\begin{equation}\label{asymp of Sm}
\begin{split}
S_m(\nu) &= 2 \frac \pi 2 \delta\left( 1+O\left( \frac 1\ell\right)\right) \cdot
 \sqrt{\frac{\nu-m}2}\left( 1 +O\left( \frac 1{ \ell-m } \right) \right)
 \\
 & \qquad \cdot  \sqrt{\frac{\nu+m}2} \left( 1 +O\left( \frac 1{ \ell+m } \right) \right)
\\
& = \pi   \delta \cdot \sqrt{k+\frac \delta 2}\sqrt{k+m+\frac \delta 2} \left( 1 +O\left( \frac 1{\ell-m} \right)\right) .
\end{split}
\end{equation}
Furthermore, since $2k=\ell-m$,
\[
 \sqrt{k+\frac \delta 2} = \sqrt{k} \left( 1+O\left( \frac \delta{\ell-m} \right) \right)  =
 \sqrt{\frac{\ell-m}2} \left( 1+O\left( \frac 1{\sqrt{\ell}(\ell-m)} \right) \right)
\]
and likewise since $k+m=(\ell +m)/2$
\[
\sqrt{k+m+\frac \delta 2}  = \sqrt{\frac{\ell+m}2} \left( 1+O\left( \frac 1{\ell^{3/2}}  \right) \right).
\]

Inserting \eqref{asymp of Sm} into the secular equation $S_m(\nu) = \sigma$ gives, when $ \ell-m\to \infty$, that
\begin{equation*}
  \delta_{\ell,m}(\sigma) 
   = \frac{2\sigma}{\pi\sqrt{\ell^2-m^2}} \left(1+O\left( \frac 1{\ell-m} \right) \right) .
\end{equation*}

\ 
When $m=\ell$, we use $\delta=\delta_{\ell,\ell}(\sigma)\ll 1/\sqrt{\ell} \to 0$ and $G(0)=1/\sqrt{\pi}$ to obtain
\[
\sigma=S_\ell(\sigma) =2\tan \left(\frac{\pi}{2}\delta\right) G(2\ell+\delta)G(\delta)\sim \pi \delta G(2\ell)G(0) \sim \pi \delta \sqrt{\ell} \frac 1{\sqrt{\pi}}
\]
as $\ell \to \infty$, which gives \eqref{asymp of delta m=ell}. 
 \end{proof}

 We derive an asymptotic for the RN gaps $d_{\ell,m}(\sigma)  = \Lambda_{\ell,m}(\sigma)-\ell(\ell(+1)$ in each cluster:

\begin{cor}
As $\ell \rightarrow \infty$, for all $0\le m < \ell$ with $m=\ell \bmod 2$, 
the Robin-Neumann gaps satisfy
 \begin{equation}\label{dlm formula 2}
  d_{\ell,m}(\sigma) = \frac{2\sigma}{\pi}\cdot \frac{2\ell+1}{\sqrt{\ell^2-m^2}} +O\left(  \frac{\sqrt{\ell}}{(\ell-m)^{3/2}}\right) .
 \end{equation}
 For $m=\ell$ we have
  \begin{equation}\label{dlm formula m=ell}
  d_{\ell,\ell}(\sigma) \sim \frac{2\sigma}{\sqrt{\pi}}\sqrt{\ell}. 
   \end{equation}
\end{cor}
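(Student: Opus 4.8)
The plan is to derive \eqref{dlm formula 2} and \eqref{dlm formula m=ell} directly from the asymptotics for $\delta_{\ell,m}(\sigma)$ in the preceding lemma, by expanding the quadratic $\Lambda_{\ell,m}(\sigma)=\nu(\nu+1)$ about $\nu=\ell$. Write $\nu=\nu_{\ell,m}(\sigma)=\ell+\delta$ with $\delta=\delta_{\ell,m}(\sigma)$. Then
\[
d_{\ell,m}(\sigma)=\nu(\nu+1)-\ell(\ell+1)=(\ell+\delta)(\ell+\delta+1)-\ell(\ell+1)=\delta(2\ell+1)+\delta^2.
\]
This identity is exact and requires no estimation; the only work is to insert the known size of $\delta$ and control the error.

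For the case $0\le m<\ell$: by \eqref{asymp of delta} we have $\delta_{\ell,m}(\sigma)=\frac{2\sigma}{\pi\sqrt{\ell^2-m^2}}\bigl(1+O(\tfrac1{\ell-m})\bigr)$, so $\delta(2\ell+1)=\frac{2\sigma}{\pi}\cdot\frac{2\ell+1}{\sqrt{\ell^2-m^2}}\bigl(1+O(\tfrac1{\ell-m})\bigr)$, and the relative error term contributes an absolute error of size $O\bigl(\frac{\ell}{\sqrt{\ell^2-m^2}}\cdot\frac1{\ell-m}\bigr)=O\bigl(\frac{\sqrt{\ell}}{(\ell-m)^{3/2}}\bigr)$, using $\sqrt{\ell^2-m^2}=\sqrt{\ell-m}\,\sqrt{\ell+m}\asymp\sqrt{\ell-m}\,\sqrt{\ell}$ when $m<\ell$. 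The remaining term $\delta^2$ is, by \eqref{delta small b} or \eqref{asymp of delta}, of size $O(\sigma^2/(\ell^2-m^2))=O(1/(\ell-m)\cdot 1/\ell)$, which is dominated by $O(\sqrt{\ell}/(\ell-m)^{3/2})$ since $1/(\ell(\ell-m))\le \sqrt{\ell}/(\ell-m)^{3/2}$ is equivalent to $(\ell-m)^{1/2}\le \ell^{3/2}$, true for $\ell-m\le\ell$. Collecting these gives \eqref{dlm formula 2}.

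For $m=\ell$: by \eqref{asymp of delta m=ell}, $\delta_{\ell,\ell}(\sigma)\sim\sigma/\sqrt{\pi\ell}$, so $\delta(2\ell+1)\sim \frac{\sigma}{\sqrt{\pi\ell}}\cdot 2\ell=\frac{2\sigma}{\sqrt\pi}\sqrt\ell$, while $\delta^2=O(1/\ell)=o(\sqrt\ell)$ is negligible; this yields \eqref{dlm formula m=ell}. There is essentially no obstacle here: the statement is a mechanical consequence of the lemma together with the exact expansion $d=\delta(2\ell+1)+\delta^2$, and the only point requiring a moment's care is checking that the bookkeeping of the two error sources (the relative $O(1/(\ell-m))$ inside $\delta$, and the quadratic $\delta^2$) both fit inside the claimed $O(\sqrt\ell/(\ell-m)^{3/2})$, which the inequalities above confirm.
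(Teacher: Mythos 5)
Your proposal is correct and follows essentially the same route as the paper: both start from the exact identity $d_{\ell,m}(\sigma)=\delta(2\ell+1)+\delta^2$ and insert the asymptotics \eqref{asymp of delta} and \eqref{asymp of delta m=ell} for $\delta_{\ell,m}(\sigma)$, checking that the relative error and the $\delta^2$ term are absorbed into $O\bigl(\sqrt{\ell}/(\ell-m)^{3/2}\bigr)$. Your error bookkeeping matches the paper's (the paper bounds $\delta^2=O(1/\ell)$ via \eqref{delta small b}, which is equivalent for this purpose), so there is nothing to add.
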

\begin{proof}
We have
\begin{multline*}
d_{\ell,m}(\sigma) = \Lambda_{\ell,m}(\sigma) - \Lambda_{\ell,m}(0) = (\nu-\ell)(\nu+\ell+1)=\delta(2\ell+1+\delta)
\\
= (2\ell+1)\delta_{\ell,m} + \delta_{\ell,m}^2  =  (2\ell+1)\delta_{\ell,m} +O\left(\frac 1{\ell}\right)
\end{multline*}
where we have used \eqref{delta small b}.
Moreover, for $  m<\ell$ we have the asymptotic formula \eqref{asymp of delta} for $\delta_{\ell,m}$, and hence
\[
\begin{split}
d_{\ell,m}(\sigma)  &=\frac{ 2(2\ell+1)\sigma}{\pi \sqrt{\ell^2-m^2}}\left(1+O\left( \frac 1{\ell-m} \right) \right) +O\left(\frac 1{\ell}\right)
\\
&=\frac{ 2(2\ell+1)\sigma}{\pi \sqrt{\ell^2-m^2}} +O\left(  \frac{\sqrt{\ell}}{(\ell-m)^{3/2}} + \frac 1\ell \right)
\\
&=\frac{ 2(2\ell+1)\sigma}{\pi \sqrt{\ell^2-m^2}} +O\left(  \frac{\sqrt{\ell}}{(\ell-m)^{3/2}}\right) .
\end{split}
\]
For the case $m=\ell$, \eqref{dlm formula m=ell}   similarly follows from \eqref{asymp of delta m=ell}. 
\end{proof}

\subsection{Equidistribution of gaps in the cluster}\label{sec:equidistribution and mean}
 We can now deduce the equidistribution of gaps in each cluster (Corollary~\ref{thm:Szego}) and compute the average gap in a cluster as asserted in \eqref{mean value in clusters}. Since the arguments are similar, we do the latter:
\begin{cor}
As $\ell\to \infty$,
\[
\frac 1{\#\dsE_\ell(\sigma)} \sum_{\lambda_n(\sigma) \in \dsE_\ell(\sigma)} d_n(\sigma) \sim 2\sigma .
\]
\end{cor}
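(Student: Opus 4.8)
The plan is to convert the average over the cluster $\dsE_\ell(\sigma)$ into a Riemann sum and pass to the limit. Recall that $\#\dsE_\ell(\sigma) = \lfloor \ell/2\rfloor + 1$, and the $m$ running over the cluster are exactly $m = \ell, \ell-2, \ell-4, \dots$, i.e. $\ell - m = 2k$ with $k = 0, 1, \dots, \lfloor \ell/2\rfloor$. Setting $m = \ell - 2k$ and $x = m/\ell \in [0,1)$, the sum becomes (up to a negligible endpoint term from $m=\ell$) a sum over $x$ spaced $2/\ell$ apart.

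\begin{proof}
Write $n=\lfloor \ell/2\rfloor$, so $\#\dsE_\ell(\sigma)=n+1$. The eigenvalues in $\dsE_\ell(\sigma)$ are indexed by $m=\ell-2k$, $k=0,1,\dots,n$. We isolate the term $m=\ell$ (i.e. $k=0$): by \eqref{dlm formula m=ell}, $d_{\ell,\ell}(\sigma)\sim \frac{2\sigma}{\sqrt{\pi}}\sqrt{\ell}$, so its contribution to the average is $O(\sqrt{\ell}/\ell)=O(\ell^{-1/2})\to 0$. For the remaining terms $1\le k\le n$ we use the asymptotic \eqref{dlm formula 2}:
\[
d_{\ell,m}(\sigma)=\frac{2\sigma}{\pi}\cdot\frac{2\ell+1}{\sqrt{\ell^2-m^2}}+O\left(\frac{\sqrt{\ell}}{(\ell-m)^{3/2}}\right).
\]
Put $x_k=m/\ell=1-2k/\ell$, so that $\frac{2\ell+1}{\sqrt{\ell^2-m^2}}=\frac{2\ell+1}{\ell\sqrt{1-x_k^2}}=\frac{2}{\sqrt{1-x_k^2}}\bigl(1+O(1/\ell)\bigr)$, and the main term of $d_{\ell,m}(\sigma)$ equals $\frac{4\sigma}{\pi\sqrt{1-x_k^2}}+O(1/\ell)$ uniformly once $\ell-m\ge 1$. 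Hence
\[
\frac{1}{n+1}\sum_{k=1}^{n} d_{\ell,\ell-2k}(\sigma)
=\frac{1}{n+1}\sum_{k=1}^{n}\frac{4\sigma}{\pi\sqrt{1-x_k^2}}
+\frac{1}{n+1}\sum_{k=1}^{n}O\left(\frac{\sqrt\ell}{(2k)^{3/2}}\right)+O\!\left(\frac{1}{\ell}\right).
\]
The error sum is $O\bigl(\frac{\sqrt\ell}{n}\sum_{k\ge 1}k^{-3/2}\bigr)=O(\ell^{-1/2})\to 0$ since $\sum k^{-3/2}$ converges and $n\asymp \ell$.

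It remains to evaluate $\lim_{\ell\to\infty}\frac{1}{n+1}\sum_{k=1}^{n}\frac{4\sigma}{\pi\sqrt{1-x_k^2}}$. The points $x_k=1-2k/\ell$, $k=1,\dots,n$, are equally spaced with gap $2/\ell\approx 1/n$ and fill out $[0,1)$ as $\ell\to\infty$; thus this is a Riemann sum for
\[
\frac{4\sigma}{\pi}\int_0^1\frac{dx}{\sqrt{1-x^2}}=\frac{4\sigma}{\pi}\cdot\frac{\pi}{2}=2\sigma.
\]
One small point requires care: the integrand has an integrable singularity at $x=1$, but the smallest relevant value of $k$ is $k=1$, giving $x_1=1-2/\ell$ and $1/\sqrt{1-x_1^2}\asymp\sqrt{\ell}$, so a single term contributes $O(\sqrt\ell/n)=O(\ell^{-1/2})$; more generally the tail $\sum_{1\le k\le \ell^{1/3}}\frac{1}{\sqrt{1-x_k^2}}\ll \sum_{k\le \ell^{1/3}}\sqrt{\ell/k}\ll \ell^{2/3}$, which is $o(n)$, so the singularity does not obstruct convergence of the Riemann sum to the (convergent) integral. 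Combining the three displays gives
\[
\frac{1}{\#\dsE_\ell(\sigma)}\sum_{\lambda_n(\sigma)\in\dsE_\ell(\sigma)}d_n(\sigma)\longrightarrow 2\sigma,
\]
as claimed.
\end{proof}

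The only genuine obstacle is the endpoint behaviour near $x=1$ (equivalently $m$ close to $\ell$): both the error term in \eqref{dlm formula 2} and the main-term integrand blow up there, so one must check that the offending terms are few enough (there are $O(\ell^{1/3})$ of them, each of size $O(\sqrt\ell)$) that their total contribution to the average is $o(1)$, and that the $m=\ell$ term (governed by the separate asymptotic \eqref{dlm formula m=ell}) is likewise negligible. Everything else is a standard Riemann-sum argument for the convergent integral $\int_0^1 dx/\sqrt{1-x^2}=\pi/2$.
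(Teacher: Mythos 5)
Your proof is correct and takes essentially the same route as the paper's: discard the $m=\ell$ term, apply \eqref{dlm formula 2} to the remaining $m$, bound the accumulated error by $O(\ell^{-1/2})$, and recognize the main term as a Riemann sum for $\frac{4\sigma}{\pi}\int_0^1\frac{dx}{\sqrt{1-x^2}}=2\sigma$; your explicit treatment of the endpoint singularity at $x=1$ just spells out what the paper dismisses as ``standard bounds for the rate of convergence of Riemann sums.'' The one small imprecision --- for $k$ small the replacement $\frac{2\ell+1}{\ell}=2+O(1/\ell)$ gives an additive error of size $O(\ell^{-1/2})$ per term, not $O(1/\ell)$ --- is harmless, since summed over the cluster these errors still average to $o(1)$.
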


\begin{proof}
Using $d_{\ell,m} = (2\ell+1)\delta_{\ell,m}+\delta_{\ell,m}^2\ll \sqrt{\ell}$ by \eqref{delta small}, we see that we may restrict the average to $m\leq  \ell- 1$ with an error of $O(\ell^{-1/2})$:
\[
\frac 1{\#\dsE_\ell(\sigma)} \sum_{\lambda_n(\sigma) \in \dsE_\ell(\sigma)} d_n(\sigma)
=
\frac 1{\ell/2+O(1)} \sum_{\substack{ 0\leq m\leq \ell-1\\  m=\ell \bmod 2}} d_{\ell,m} + O(\ell^{-1/2 }) .
\]
Then we  use \eqref{dlm formula 2} to obtain
\[
\frac 1{\ell/2+O(1)} \sum_{\substack{0\leq m\leq \ell-1\\m=\ell \bmod 2}} d_{\ell,m}
  = \frac 1{\ell /2}\sum_{\substack{0\leq m\leq \ell-1\\m=\ell \bmod 2}}
\frac{ 2(2\ell+1)\sigma}{\pi \sqrt{\ell^2-m^2}}
+O\left( \frac 1{\ell^{1/2}} \right) .
\]
Moreover, using standard bounds for the rate of convergence of Riemann sums gives
\[
\begin{split}
\frac 1{\ell /2}\sum_{\substack{0\leq m\leq \ell-1\\m=\ell \bmod 2}}
\frac{ 2(2\ell+1)\sigma}{\pi \sqrt{\ell^2-m^2}}
&=\left(\frac{4 \sigma}{\pi} +O(\frac 1\ell)\right)  \left(
\int_0^{1 } \frac {dx}{\sqrt{ 1-x^2}}  +O\left(\frac 1{\ell^{ 1 /2}} \right)
\right)
\\
&= 2\sigma + O\left(\frac 1{\ell^{1/2}} \right)    .
\end{split}
\]
Altogether, we obtain
\[
\frac 1{\#\dsE_\ell(\sigma)} \sum_{\lambda_n(\sigma) \in \dsE_\ell(\sigma)} d_n(\sigma) =
2\sigma+ O\left(\frac 1{\ell^{1/2}}   \right)    \sim 2\sigma
\]
as claimed.
\end{proof}

\section{Bounds for the RN gaps: Proof of Theorem~\ref{thm:NR gaps bound}}

\begin{proof}
 Using \eqref{delta small} shows that for $\ell\gg 1$ and $0\le m \le \ell$ with $m\equiv \ell \mod{2}$,
 \[
 \Lambda_{\ell,m}(\sigma)-\ell(\ell+1) = (\nu_{\l,m}(\sigma)-\ell)(\nu_{\ell,m}(\sigma)+\ell+1) \ll \sigma\sqrt{\ell}
 \]
 so that
 \[
 \max\Big\{ |\lambda-\ell(\ell+1)|:  \lambda\in \dsE_\ell(\sigma) \Big\} \ll  \sigma \sqrt{\ell}  .
\]
Therefore, for all $n $, we  have
 \begin{equation}\label{cor:bound on distance}
  \lambda_n(\sigma)-\lambda_n(0) \ll \sigma \lambda_n(0)^{1/4} .
  \end{equation}
 This proves Theorem~\ref{thm:NR gaps bound}\ref{thm:NR gaps bound part a}.


To show  that we can actually attain the upper bound in \eqref{cor:bound on distance}, note that Proposition \ref{prop:nomult Robin}
demonstrates that to get the largest possible Robin-Neumann gaps, it is worth, given $\ell\ge 0$, to take $m=\ell$.   
We then use \eqref{dlm formula m=ell} to obtain
\[
  d_{\ell,\ell}(\sigma) \sim \frac{2\sigma}{\sqrt{\pi}}\sqrt{\ell} \sim \frac{2}{\sqrt{\pi}} \Lambda_{\ell,\ell}(0)^{1/4} \sigma ,
  \]
 which proves Theorem~\ref{thm:NR gaps bound}\ref{thm:NR gaps bound part b}. 
 \end{proof}



We note that $\Lambda_{\ell,\ell}(0)\in \dsE_\ell(0)$, and therefore for each $\ell\gg 1$,  we have found
$n=\ell^2/4+O(\ell)$ for which
 \[
 \lambda_n(\sigma)-\lambda_n(0) \gg \lambda_n(0)^{1/4} \cdot \sigma,
\]
 and in particular that the Robin-Neumann gaps are unbounded.


  \section{Level spacings}\label{sec:spacings}
In this section, we show that the level spacing distribution of the desymmetrized  Robin spectrum on the hemisphere is a delta function at the origin, as is the case with Neumann or Dirichlet boundary conditions.
We note that for other spherical caps (cf \cite{Haines} for background), we expect that the level spacing distribution is Poissonian.
A numerical plot for the desymmetrized Dirichlet spectrum on the cap with opening angle $\theta_0=\pi/3$ (the hemisphere has $\theta_0=\pi/2$) is displayed in Figure~\ref{fig:cappiover3upto100spacings}.

\begin{figure}[ht]
\begin{center}
\includegraphics[height=40mm]{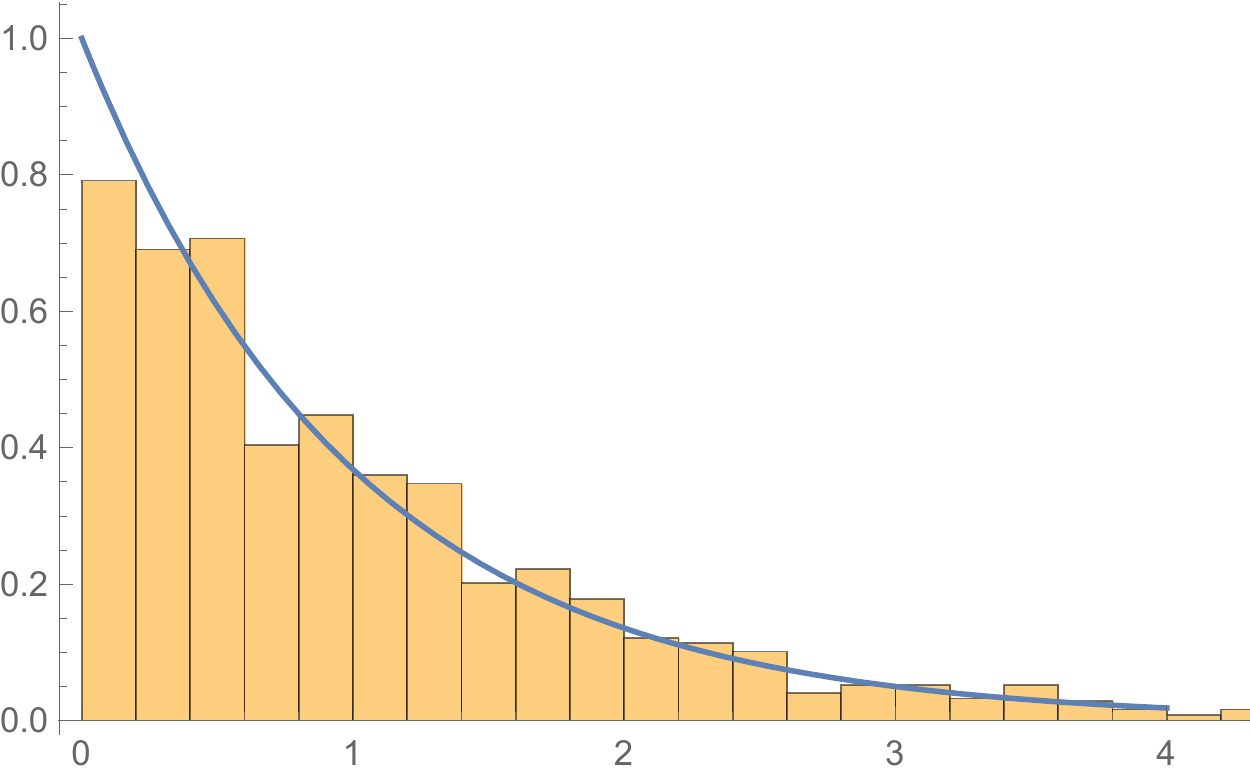}
\caption{The level spacing distribution $P(s)$ for all $1258$ desymmetrized Dirichlet eigenvalues $\nu(\nu+1)$ with $\nu<100$ for the spherical cap with opening angle $\theta_0=\pi/3$. The solid curve is the Poisson result $\exp(-s)$.}
\label{fig:cappiover3upto100spacings}
\end{center}
\end{figure}

 \begin{proof}[Proof of Corollary \ref{cor:spacings}]
 The statement of Corollary~\ref{cor:spacings} 
is equivalent to the fact that for every $y>0$,
\begin{equation}\label{eq:large gaps}
\lim\limits_{N\rightarrow\infty}\frac{1}{N}\#\{ n\le N:\: \lambda_{n+1}^{\sigma}-\lambda_{n}^{\sigma}>y\} = 0.
\end{equation}

Recall that we divided  the ordered desymmetrized Robin eigenvalues $\{\lambda_{n}^{\sigma}\}_{n\ge 0}$ into disjoint clusters $\dsE_\ell(\sigma)$ (see \eqref{eq:clusters def}), each at distance $O(\sqrt{\ell})$ from the Neumann eigenvalues $\ell(\ell +1)$, so $\diam \dsE_\ell(\sigma)\ll \sqrt{\ell}$ (Theorem~\ref{thm:NR gaps bound}\ref{thm:NR gaps bound part a}), and hence of distance $2\ell +O(\sqrt{\ell})$ from the closest other cluster,
and of size $\#\dsE_\ell(\sigma) = \lfloor \ell/2\rfloor +1 = \ell/2+O(1)$.

For $N\gg 1$, denote by $L$ the index of the cluster to which $\lambda_N(\sigma)$ belongs, so that
$$\bigcup\limits_{\ell \leq L-1} \dsE_\ell(\sigma) \subset \{\lambda_n(\sigma):n\leq N\}\subseteq \bigcup\limits_{\ell \leq L} \dsE_\ell(\sigma)$$
and therefore
$$N=\sum\limits_{\ell\leq L-1} \#\dsE_\ell(\sigma) +O(L) = \frac{L^2}{4} +O(L)$$
so that
 $L=O(\sqrt{N})$.
Then
\begin{equation}
\label{eq:numb large gaps sum clust}
\#\{ n\le N:\: \lambda_{n+1}^{\sigma}-\lambda_{n}^{\sigma}>y\} \leq \sum\limits_{\ell=0}^{L}
\sum\limits_{\substack{\lambda_{n+1}^{\sigma}-\lambda_{n}^{\sigma}>y \\ \lambda_{n}^{\sigma}\in \dsE_{\ell}(\sigma)}}1 .
\end{equation}

Denote by $n_+$ the maximal index of an eigenvalue in $\dsE_\ell(\sigma)$, and by $n_-$ the minimal index. Then the gaps corresponding to the cluster $\dsE_\ell(\sigma)$ are firstly those with $\lambda_{n+1}(\sigma)-\lambda_n(\sigma)$ with $n_-\leq n\leq n_+-1$ and secondly, the last gap $\lambda_{n_++1}(\sigma)-\lambda_{n_+}(\sigma)$.
The number of those gaps of the second kind is at most $L+1 = O(\sqrt{N})$.

For the gaps $>y$ of the first kind, we have in each cluster
\begin{equation*}
\begin{split}
\sum\limits_{\substack{\lambda_{n+1}^{\sigma}-\lambda_{n}^{\sigma}>y \\ \lambda_{n}^{\sigma}\in \dsE_{\ell}(\sigma) \\ n<n_+}}1
&<
\sum\limits_{\substack{\lambda_{n+1}^{\sigma}-\lambda_{n}^{\sigma}>y \\  n_-\leq n<n_+}}  \frac{\lambda_{n+1}(\sigma)-\lambda_n(\sigma)}{y}
\\
&\leq
\sum\limits_{ n_-\leq n<n_+}  \frac{\lambda_{n+1}(\sigma)-\lambda_n(\sigma)}{y}
= \frac{\lambda_{n_+}-\lambda_{n_-}}{y} .
\end{split}
\end{equation*}
Now $\lambda_{n_+}-\lambda_{n_-} = \diam \dsE_\ell(\sigma) \ll \sqrt{\ell}$, and so we find that
\begin{equation}
\label{eq:numb gaps telescope}
\sum\limits_{\substack{\lambda_{n+1}^{\sigma}-\lambda_{n}^{\sigma}>y \\ \lambda_{n}^{\sigma}\in \dsE_{\ell}(\sigma) \\ n<n_+}}1 \ll \frac{\sqrt{\ell}}y .
\end{equation}
Summing the inequality \eqref{eq:numb gaps telescope}
over $\ell \leq L = O(\sqrt{N})$ gives
\[
\sum\limits_{\ell=0}^{L}\sum\limits_{\substack{\lambda_{n+1}^{\sigma}-\lambda_{n}^{\sigma}>y \\ \lambda_{n}^{\sigma}\in \dsE_{\ell}(\sigma) \\ n<n_+}}1
\ll \sum_{\ell \leq L} \frac{\sqrt{\ell}}y  \ll \frac{L^{3/2}}y \ll \frac{N^{3/4}}y .
\]
Altogether, substituting this into \eqref{eq:numb large gaps sum clust},
and upon taking into account the gaps of the second kind, we find that for $N\gg_y 1$,
\[
\#\{ n\le N:\: \lambda_{n+1}^{\sigma}-\lambda_{n}^{\sigma}>y\} \ll \frac{N^{3/4}}y +\sqrt{N},
\]
which proves \eqref{eq:large gaps}.
\end{proof}

\end{document}